\theoremstyle{plain}
\newtheorem{theorem}{Theorem}
\newtheorem{corollary}{Corollary}
\newtheorem{proposition}{Proposition}
\theoremstyle{definition}
\theoremstyle{remark}
\newtheorem{remark}{Remark}
\newtheorem{example}{Example}
\newcommand{\Z}{\ensuremath{\mathbb{Z}}}   
\newcommand{\N}{\ensuremath{\mathbb{N}}}
\newcommand{\Hom}{\operatorname{Hom}}
\newcommand{\Tor}{\operatorname{Tor}}
\newcommand{\Ext}{\operatorname{Ext}} 
\newcommand{\Ker}{\operatorname{Ker}}
\newcommand{\Soc}{\operatorname{Soc}}
\newcommand{\J}{\operatorname{J}}
\newcommand{\E}{\operatorname{E}}
\numberwithin{equation}{section} 
\begin{document}
\title[Rings whose mininjective modules are injective]{Rings whose mininjective modules are injective}

\author[Y. Alag\"oz]{Yusuf Alag\"oz}
\address{Yusuf Alag\"oz \\ Department of Mathematics \\ Hatay Mustafa Kemal University\\  Hatay, Turkey}
\email{yusuf.alagoz@mku.edu.tr}

\author[S. Benl\.{i}-G\"{o}ral]{Sinem Benl\.{i}-G\"{o}ral}
\address{Sinem Benl\.{i}-G\"{o}ral \\ Department of Mathematics \\ \.{I}zmir Institute of Technology \\ \.{I}zmir, Turkey}
\email{sinembenli@iyte.edu.tr}

\author[E. B\"uy\"uka\c{s}{\i}k]{Engin B\"uy\"uka\c{s}{\i}k}
\address{Engin B\"uy\"uka\c{s}{\i}k \\ Department of Mathematics \\ \.{I}zmir Institute of Technology \\ \.{I}zmir, Turkey}
\email{enginbuyukasik@iyte.edu.tr}

\author[J. R. Garc\'{i}a Rozas]{Juan Ram\'{o}n Garc\'{i}a Rozas}
\address{Juan Ram\'{o}n Garc\'{i}a Rozas \\ Department of Mathematics \\ Almeria University\\ Almer\'{i}a, Spain}
\email{jrgrozas@ual.es}

\author[L. Oyonarte]{Luis Oyonarte}
\address{Luis Oyonarte \\ Department of Mathematics \\ Almeria University\\ Almer\'{i}a, Spain}
\email{oyonarte@ual.es}

\begin{abstract}
The main goal of this paper is to characterize rings over which the mininjective modules are injective, so that the classes of mininjective modules and injective modules coincide. We show that these rings are precisely those Noetherian rings for which every min-flat module is projective and we study this characterization in the cases when the ring is Kasch, commutative and when it is quasi-Frobenius. We also treat the case of $n\times n$ upper triangular matrix rings, proving that their mininjective modules are injective if and only if $n=2$.

We use the developed machinery to find a new type of examples of indigent modules (those whose subinjectivity domain contains only the injective modules), whose existence is known, so far, only in some rather restricted situations.
\end{abstract}

\subjclass[2020]{Primary 16D10, 16D50, 16E30}

\keywords{(min) injective modules, almost injective modules, quasi $V$-rings, quasi-Frobenius rings, strongly min-coherent rings}

\maketitle

\section{Introduction}
Quasi-Frobenius rings ($QF$-rings, for short) were introduced by Nakayama \cite{nakayama, nakayama1} in the study of representations of algebras. Subsequently, $QF$-rings played a central role in ring theory and numerous characterizations were given by various authors (see for instance \cite{eilenberg-nakayama, faith, faith-walker, ikeda, ikeda-nakayama}). In particular, Ikeda \cite[Theorem 1]{ikeda} characterized these rings as the left (right) self-injective, left and right Artinian rings. Numerous investigations have been conducted to improve on Ikeda's previously mentioned result by weakening either the Artinian condition, or the injectivity condition, or both, and this has led to the discovery of new concepts for rings and modules such as mininjectivity and simple-injectivity, as well as numerous important studies on them (see for example \cite{bjork, harada, minflat, onminflat, maorings, universallymininjective}). In addition, some generalizations of the aforementioned rings and modules have also been studied in the literature (see  \cite{strongly-simple, soc-injective, almost-v, cl}).

The concept of mininjectivity for rings (in the Artinian case) first appeared as a property for characterizing $QF$-rings in the paper of Ikeda \cite{ikeda}, and it was shown that a ring $R$ is $QF$ if and only if it is right and left Artinian and right and left mininjective \cite[Theorem 2.30]{qfrings}. In 1982, Harada \cite{harada} introduced the notions of mininjective modules and rings as follows: if $M$ and $N$ are right $R$-modules, $M$ is said to be \textit{min-N-injective} if for every simple submodule $K$ of $N$ and every homomorphism $f: K \to M$, there exists a homomorphism $h: N \to M$ such that $h|_{K}=f$. If we let $N=R$ then $M$ is called \textit{mininjective}, that is, $\Ext^{1}_{R}(R/I, M)=0$ for every minimal right ideal $I$ of $R$.

Let us also remind the notion of min-flat modules introduced in \cite{minflat}. A left $R$-module $M$ is called \textit{min-flat} if $\Tor^R_1(R/I,M)=0$ for any minimal right ideal $I$ of $R$. With the help of the isomorphism $\Ext^{1}_{R}(R/I, M^{+})\cong \Tor_{1}^{R}(R/I, M)^{+}$ for any right ideal $I$ of $R$, it can be easily seen that a left $R$-module $M$ is min-flat if and only if $M^{+}$ is mininjective.

On the other hand, according to Harada \cite{harada-noteonalmostrelative}, $M$ is said to be \emph{simple-$N$-injective} if for every submodule $K$ of $N$, every homomorphism $f : K \rightarrow M$ with $f(K)$ simple extends to $N$. If $N=R$ then $M$ is called simple-injective.

Although mininjective modules and simple-injective modules have been extensively studied, the knowledge of these classes of modules is far from being near completion, with a large number of interesting problems remaining open that should be addressed. Recently, rings whose right simple-injective modules are injective have been characterized in \cite{onsimpleinj}, and this has been a motivation to devote this paper to the study of mininjective modules and to find conditions on the ring for its mininjective modules to be injective.

In the study of the rings whose mininjective right $R$-modules are injective, one specific question is particularly significant: when is a simple right $R$-module not isomorphic to any right ideal of $R$ injective?

Rings whose simple right $R$-modules not isomorphic to right ideals are all injective are called \emph{right quasi $V$-rings} and were introduced in \cite{onsimpleinj}. We devote Section 2 to develop a deep treatment of these rings.

The concept of right quasi $V$-rings generalizes those of right (generalized) $V$-ring and right Kasch ring. We give an example showing that being a quasi $V$-ring is not left-right symmetric and we prove that $R/J(R)$ is a right $GV$-ring precisely when $R$ is a right quasi $V$-ring.

\emph{Right almost $V$-rings} were introduced as the rings $R$ for which every simple right $R$-module is almost injective (see \cite{almost-v}), and it is a problem of great interest to know how right almost $V$-rings and quasi $V$-rings are related. This question is also addressed in Section 2, where we find conditions on the ring for the class of right quasi $V$-rings, the class of right Kasch rings and that of right almost $V$-rings coincide. Our interest in almost $V$-rings lies in \cite{cl}, where the authors characterize them by assuming that the rings are already commutative Noetherian quasi $V$-rings. But we can show that these results can be extended to more general commutative quasi $V$-rings (thus avoiding the noetherianity condition).

In Section 3 we address the first natural question that comes to mind right after introducing the concept of mininjectivity: when do the classes of mininjective and injective modules coincide? Following \cite{strongly-min-coherent}, given a class $\mathfrak{C}$ of finitely presented right $R$-modules, the ring $R$ is called \textit{right strongly $\mathfrak{C}$-coherent} if whenever $0\rightarrow K\rightarrow P\rightarrow C\rightarrow 0$ is exact, $C\in \mathfrak{C}$ and $P$ is finitely generated projective, $K$ is $\mathfrak{C}$-projective. We define \textit{right strongly min-coherent} rings by taking $\mathfrak{C}=\{R/I:\ I\ \mbox{is a minimal right ideal of}\ R\}$. These rings play a crucial role in studying when mininjectivity implies injectivity. In fact, in Theorem 3.2 we give a complete general characterization of rings whose mininjective modules are all injective, and it can be seen that these rings must necessarily be, among other things, strongly min-coherent rings. In this Section 3 we also give other deeper characterizations of these rings (rings over which mininjectivity implies injectivity) in some particular cases. We show that if every mininjective right $R$-module is injective then $R$ is a right Artinian, right strongly min-coherent and a right quasi $V$-ring. Over a right $PS$-ring we obtain that if every mininjective right $R$-module is injective then $R$ is a right hereditary, right Artinian and a right generalized $V$-ring. We also prove that if $R$ is a right Kasch ring then every mininjective right $R$-module is injective if and only if $R$ is right Artinian and right strongly min-coherent. In particular, a commutative ring $R$ is strongly min-coherent and Artinian if and only if every mininjective $R$-module is injective. We also treat the case of $QF$-rings: in the proof of Ikeda's Theorem given in \cite[Theorem 2.30]{qfrings}, it is essential that $R$ be two-sided mininjective (see \cite[Bj\"ork Example 2.5]{qfrings}). However, in this paper we give a characterization of $QF$-rings using only the right mininjectivity: $R$ is $QF$ if and only if every mininjective right R-module is projective if and only if $R$ is right mininjective, right Artinian, right strongly min-coherent and right Kasch.

Universally mininjective rings are also not forgotten. We prove that $R$ is right universally mininjective if and only if any simple right module is mininjective.

We conclude Section 3 by treating the case of upper triangular matrix rings. We obtain that over the ring $R=UT_{n}(k)$, where $k$ is a field, every min-injective right $R$-module is injective if and only if $n=2$.

We now set the general assumptions, terminology and notation that will be used throughout the paper. The letter $R$ will always stand for an associative ring with identity $1 \neq 0$, and modules are unital $R$-modules. For a module $M$, its character module, $\Hom_{\mathbb{Z}}(M,\mathbb{Q/Z})$, is denoted by $M^{+}$. The notations $\J(M)$, $\Soc(M)$ and $\E(M)$ are used for the Jacobson radical, the socle and the injective envelope of $M$, respectively.

\section{Quasi $V$-rings}

We start by recalling the well known concepts of $V$-rings and Kasch rings. While a ring is said to be a \emph{right $V$-ring} if all its simple right modules are injective, the ring is \emph{right Kasch} if every simple right module embeds in the ring itself.

As a generalization of right $V$-rings, \emph{right quasi $V$-rings} were defined in \cite{onsimpleinj} as those rings whose simple right $R$-modules which are not isomorphic to a right ideals are all injective. Left quasi $V$-rings are defined similarly, and a left quasi $V$-ring needs not be a right quasi $V$-ring (see Example \ref{leftbutright}).

\begin{remark} \label{quasiVbutnotkasch}
Every right Kasch ring is a right quasi $V$-ring, but the converse is not true in general. Let $R=\prod_\Gamma F$ be an infinite direct product of copies of a field. Then, $R$ is a non-semisimple commutative regular ring and therefore it is a (quasi) $V$-ring, but $R$ is not Kasch since otherwise it would be semisimple.
\end{remark}

In the next result we provide a situation in which the notions of Kasch ring and quasi $V$-ring coincide. A ring $R$ is said to be a \emph{right $N$-ring} if each maximal right ideal of $R$ is finitely generated. Right Noetherian rings are trivial examples of right $N$-rings, but not all right $N$-rings have to be Noetherian as can be seen with ring $R=C^{\infty}[0,1]$ of all smooth (infinitely differentiable) functions on $[0,1]$. Every maximal ideal $M$ of $R$ is of the form $\{ f \in R: f(c)=0\}$ where $c$ is a uniquely determined point in $[0,1]$, and all maximal ideals are principal (see \cite[Corollary 28]{nonnoether}). However, $R$ is not Noetherian. To see this, consider the chain $I_1 \subseteq I_2 \subseteq \dots \subseteq I_n \subseteq I_{n+1} \subseteq \cdots $ of ideals of $R$ where $I_n = \{ f \in R : f=0 \hspace{0.2 cm} \text{on} \hspace{0.2 cm} [0, \frac{1}{n}]\}$. Since the bump function
\begin{equation*}
b_{n+1}(x)=
    \begin{cases}
        0 & \text{if } x \in [0, \frac{1}{n+1}]\\
        e^{-\frac{1}{(x-\frac{1}{n+1})^2}} & \text{if } x \in (\frac{1}{n+1}, 1]
    \end{cases}
\end{equation*}
is contained in $I_{n+1}$ but not in $I_n$, we have that $I_n \neq I_{n+1}$ for every $n \in \N$.

\begin{proposition} \label{commkasch}
Let $R$ be a commutative $N$-ring. Then, $R$ is a quasi $V$-ring if and only if $R$ is a Kasch ring.
\end{proposition}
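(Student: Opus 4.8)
The plan is to prove both implications, with essentially all the content in the direction ``quasi $V$-ring $\Rightarrow$ Kasch''. For the reverse implication, if $R$ is Kasch then every simple $R$-module embeds in $R$, so there are no simple modules failing to be isomorphic to an ideal and the defining condition of a quasi $V$-ring holds vacuously; this is already recorded in Remark \ref{quasiVbutnotkasch}.

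For the forward direction I would argue by contradiction. Suppose $R$ is a commutative $N$-ring and a quasi $V$-ring, and let $\mathfrak{m}$ be a maximal ideal with $k:=R/\mathfrak{m}$ not isomorphic to any ideal of $R$ (equivalently, $k$ does not embed in $R$). Since $R$ is a quasi $V$-ring, $k$ is then injective. The key step is to show that this, together with $\mathfrak{m}$ being finitely generated (the $N$-ring hypothesis), forces $\mathfrak{m}=\mathfrak{m}^2$. Indeed, $\mathfrak{m}/\mathfrak{m}^2$ is a finite-dimensional $k$-vector space, hence a finite direct sum of copies of the injective module $k$ and therefore injective; being a submodule of $R/\mathfrak{m}^2$, it is then a direct summand, so $R/\mathfrak{m}^2\cong \mathfrak{m}/\mathfrak{m}^2\oplus C$ with $C\cong (R/\mathfrak{m}^2)/(\mathfrak{m}/\mathfrak{m}^2)\cong k$. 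Both summands are annihilated by $\mathfrak{m}$, so $\mathfrak{m}\cdot(R/\mathfrak{m}^2)=0$, which gives $\mathfrak{m}=\mathfrak{m}^2$.

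Next I would invoke the determinant trick (a form of Nakayama's lemma) for the finitely generated ideal $\mathfrak{m}$ satisfying $\mathfrak{m}\cdot\mathfrak{m}=\mathfrak{m}$: there exists $e\in\mathfrak{m}$ with $(1-e)\mathfrak{m}=0$. Then $(1-e)e=0$ shows that $e$ is idempotent, and $\mathfrak{m}=e\mathfrak{m}\subseteq eR\subseteq\mathfrak{m}$ shows that $\mathfrak{m}=eR$, a direct summand of $R$. Hence $k=R/\mathfrak{m}\cong(1-e)R$ is isomorphic to an ideal of $R$, contradicting the choice of $\mathfrak{m}$. Therefore every simple $R$-module is isomorphic to an ideal of $R$, i.e. $R$ is Kasch.

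I expect the main obstacle to be precisely the middle step, extracting $\mathfrak{m}=\mathfrak{m}^2$ from the injectivity of $R/\mathfrak{m}$, and in particular recognizing that the $N$-ring hypothesis is exactly what is needed there: it is the finiteness of $\dim_k\mathfrak{m}/\mathfrak{m}^2$ that makes $\mathfrak{m}/\mathfrak{m}^2$ injective and lets it split off from $R/\mathfrak{m}^2$. Over a non-$N$-ring an infinite direct sum of copies of $k$ need not be injective, which is why the statement restricts to $N$-rings. The remaining ingredients, namely that injective submodules are direct summands, that finite direct sums of injectives are injective, and the determinant trick, are all standard.
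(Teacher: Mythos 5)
Your proof is correct, but it takes a genuinely different (and more self-contained) route than the paper's. The paper argues as follows: if $S=R/\mathfrak{m}$ is simple, not isomorphic to an ideal, hence injective, then $S$ is flat by a lemma of Ware on injective simple modules over commutative rings; since $\mathfrak{m}$ is finitely generated, $S$ is finitely presented, and a finitely presented flat module is projective, so $0\to\mathfrak{m}\to R\to S\to 0$ splits and $S$ embeds in $R$ after all. You instead extract $\mathfrak{m}=\mathfrak{m}^2$ directly by splitting the injective summand $\mathfrak{m}/\mathfrak{m}^2$ off $R/\mathfrak{m}^2$, and then apply the determinant trick to the finitely generated idempotent ideal $\mathfrak{m}$ to produce an idempotent $e$ with $\mathfrak{m}=eR$, so that $S\cong(1-e)R$. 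The two arguments are close cousins: since $\Tor_1^R(R/\mathfrak{m},R/\mathfrak{m})\cong\mathfrak{m}/\mathfrak{m}^2$, the flatness of $S$ in the paper's proof also encodes $\mathfrak{m}=\mathfrak{m}^2$, and both proofs ultimately split the same short exact sequence using finite generation of $\mathfrak{m}$. What your version buys is independence from the two cited external results (Ware's lemma and the flat-plus-finitely-presented-implies-projective theorem), replacing them with the elementary observations that a finite direct sum of injectives is injective and with Nakayama's determinant trick; what the paper's version buys is brevity. You also correctly isolate where the $N$-ring hypothesis enters (finite-dimensionality of $\mathfrak{m}/\mathfrak{m}^2$ and the determinant trick), and your treatment of the easy direction matches Remark \ref{quasiVbutnotkasch}.
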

\begin{proof}
We only need to prove the necessary condition, so assume that there exists a simple $R$-module $S$ which is not isomorphic to any ideal of $R$. Then, $S$ is injective by the hypothesis and hence $S$ is flat by \cite[Lemma 2.6]{ware}.

On the other hand, $S\cong R/I$ for some is a maximal right ideal $I$ of $R$, and since $I$ is finitely generated, $R/I$ is finitely presented. But then $S$ is both flat and finitely presented, that is, $R/I$ is projective (\cite[Theorem 4.30]{lam}), so the short exact sequence $0\rightarrow I\rightarrow R\rightarrow R/I \rightarrow 0$ splits. This gives that $S$ is isomorphic to an ideal of $R$, which is a contradiction. Therefore, every simple $R$-module is isomorphic to a minimal ideal of $R$, and so $R$ is a Kasch ring.
\end{proof}

Recall that a ring $R$ is said to be a \emph{right generalized $V$-ring} (shortly, \emph{right $GV$-ring}) if every simple right $R$-module is either injective or projective. Thus, since every projective simple right $R$-module embeds in $R$, over a right $GV$-ring every simple right $R$-module that is not isomorphic to a right ideal of $R$ must be injective. Hence, every right $GV$-ring is a right quasi $V$-ring and we have the following.

\begin{corollary} \label{gv}
Let $R$ be a ring. The following are equivalent.
\begin{enumerate}
\item $R$ is a right $GV$-ring.
\item $R$ is a right quasi $V$-ring and $\Soc(R_{R})$ is projective.
\end{enumerate}
\end{corollary}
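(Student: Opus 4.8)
The plan is to prove the two implications separately, the forward direction being essentially the observation already made in the paragraph preceding the statement, and the backward direction requiring the real work. For (1) $\Rightarrow$ (2): if $R$ is a right $GV$-ring then every simple right $R$-module is injective or projective, so in particular every simple not isomorphic to a right ideal is injective, giving that $R$ is a right quasi $V$-ring. For the projectivity of $\Soc(R_R)$, I would argue that $\Soc(R_R)$ is a (possibly infinite) direct sum of simple right ideals $S_i \subseteq R$; each such $S_i$ is a simple module that embeds in $R$, and I want to conclude each $S_i$ is projective. A simple right ideal that is injective splits off as a direct summand of $R$, hence is projective; a simple right ideal that is projective is projective. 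Since under the $GV$ hypothesis every simple is one or the other, each $S_i$ is projective, and a direct sum of projectives is projective, so $\Soc(R_R)$ is projective.

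For (2) $\Rightarrow$ (1): assume $R$ is a right quasi $V$-ring with $\Soc(R_R)$ projective, and let $S$ be a simple right $R$-module. If $S$ is not isomorphic to a right ideal of $R$, then $S$ is injective by the quasi $V$ hypothesis and we are done. If $S$ \emph{is} isomorphic to a right ideal of $R$, then that copy of $S$ is a simple submodule of $R_R$, hence lies in $\Soc(R_R)$; thus $S$ is isomorphic to a direct summand of $\Soc(R_R)$ (a simple submodule of a semisimple module is a summand of it), and since $\Soc(R_R)$ is projective, $S$ is projective. In either case $S$ is injective or projective, so $R$ is a right $GV$-ring.

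The main point to be careful about is the interplay between ``isomorphic to a right ideal'' and ``isomorphic to a simple submodule of $\Soc(R_R)$'': a simple module isomorphic to a right ideal $I$ of $R$ forces $I$ itself to be simple, hence $I \subseteq \Soc(R_R)$, and then $I$ is a direct summand of the semisimple module $\Soc(R_R)$, so projectivity of $\Soc(R_R)$ passes to $I \cong S$. The only mild subtlety is that $\Soc(R_R)$ may not be finitely generated, but this causes no trouble since arbitrary direct sums of projectives are projective and summands of projectives are projective. No deep machinery is needed beyond these standard facts and the definitions of right $GV$-ring and right quasi $V$-ring recalled above.
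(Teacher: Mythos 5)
Your proof is correct and follows essentially the same line as the paper, which states the corollary without a separate proof, relying on the preceding observation that projective simples embed in $R$ and that injective simple right ideals are direct summands of $R_R$ (hence projective). Your write-up simply makes explicit the details the paper leaves implicit, including the standard facts that a simple submodule of the semisimple module $\Soc(R_R)$ is a direct summand of it and that summands of projectives are projective.
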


\begin{example} \label{quasiVbutnotGV}
For an example of a right quasi $V$-ring which is not a right $GV$-ring, consider the commutative Artinian ring $R=\Z/p^{3}\Z$ where $p$ is a prime number. As $R$ is Kasch, $R$ is a quasi $V$-ring. Note that $\Soc(R)$ is neither injective nor projective, so $R$ is not a $GV$-ring by Corollary \ref{gv}.
\end{example}

The following example shows that a left quasi $V$-ring needs not be a right quasi $V$-ring.

\begin{example} \label{leftbutright}
We know there are examples of left $GV$-rings with projective right socle which are not right $GV$-rings (see for example \cite[Section 4, Example (d)]{baccella}), so let $R$ be one such ring. Then, $R$ turns out to be a left quasi $V$-ring which is not a right quasi $V$-ring by Corollary \ref{gv}. Thus, we see there exist left quasi $V$-rings which are not right quasi $V$-rings.
\end{example}

We now show with an example that the class of quasi $V$-rings is not closed under quotients.

\begin{example}
Let $R$ be a commutative Noetherian local ring with $\Soc(R)=0$. Call $M$ its maximal ideal and consider $S=R/M$ and the trivial extension $\tilde{R}$ of $R$, that is, $$\tilde{R}= R \ltimes S = \Biggl\{ \left( \begin{array}{cc} r & x \\ 0 & r \end{array} \right)  : r \in R, x \in S \Biggl\}$$ with the ordinary matrix operations. Then, $\tilde{R}$ is a commutative local ring whose maximal ideal is $\tilde{M}= M \ltimes S$. Moreover, since $\tilde{R}/\tilde{M} \cong R/M =S \cong 0 \ltimes S$, we have that $\tilde{R}$ is a Kasch ring and hence a quasi $V$-ring.

Now, the assumption $\Soc(R)=0$ implies that the intersection of all essential ideals of $R$ is zero, but if $I$ is any essential ideal of $R$, then the ideal $$\tilde{I}= \Biggl\{ \left( \begin{array}{cc} a & x \\ 0 & a \end{array} \right): a \in I, x \in S \Biggl\}$$ is an essential ideal of $\tilde{R}$. Thus, $$\Soc(\tilde{R})= \bigcap \{ \tilde{I}: I \text{\hspace{0.09 cm}is essential in}\hspace{0.09 cm} R\} = \left( \begin{array}{cc} 0 & S \\ 0 & 0 \end{array} \right)= 0 \ltimes S.$$

Since $\tilde{R}/\Soc(\tilde{R})\cong R$ and $R$ has zero socle, $\tilde{R}/\Soc(\tilde{R})$ is not Kasch. Thus, $\tilde{R}/\Soc(\tilde{R})$ is not a quasi $V$-ring by Proposition \ref{commkasch}.
\end{example}

\begin{proposition}
Let $R$ be a right quasi $V$-ring. Then, $R/ \J(R)$ is a right $GV$-ring.
\end{proposition}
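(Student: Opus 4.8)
The plan is to go through Corollary \ref{gv}. Writing $\bar{R}=R/\J(R)$, that corollary reduces the statement to proving two things: that $\bar{R}$ is a right quasi $V$-ring, and that $\Soc(\bar{R}_{\bar{R}})$ is a projective $\bar{R}$-module.

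First I would dispose of the socle condition, which is the soft part. The ring $\bar{R}$ is semiprimitive, so $\J(\bar{R})=0$; hence, by Brauer's lemma, every minimal right ideal $\bar{I}$ of $\bar{R}$ has the form $e\bar{R}$ for an idempotent $e$, since the alternative $\bar{I}^{2}=0$ would make $\bar{R}x$ a square-zero (so nilpotent) left ideal contained in $\J(\bar{R})=0$ for every $x\in\bar{I}$, forcing $\bar{I}=0$. Thus each minimal right ideal of $\bar{R}$ is a direct summand of $\bar{R}$, hence projective, and since $\Soc(\bar{R}_{\bar{R}})$ is semisimple it is a direct sum of minimal right ideals and therefore projective.

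Next, the quasi $V$ condition on $\bar{R}$. Let $S$ be a simple right $\bar{R}$-module — equivalently, a simple right $R$-module with $S\J(R)=0$ — that is not isomorphic to any right ideal of $\bar{R}$; the goal is to show $S$ is injective over $\bar{R}$. A useful preliminary reduction is that if $S$ is injective over $R$, then it is injective over $\bar{R}$: given a right ideal $K/\J(R)$ of $\bar{R}$ and a homomorphism $K/\J(R)\to S$, lift it to $K\to S$, extend this to $R\to S$ by $R$-injectivity of $S$, observe that the extension sends each $j\in\J(R)$ into $S\J(R)=0$ and hence factors through $\bar{R}$, and invoke Baer's criterion over $\bar{R}$. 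So it suffices to show $S$ is injective over $R$. If $S$ is not isomorphic to any right ideal of $R$, this is immediate from the hypothesis that $R$ is a right quasi $V$-ring. If $S\cong I$ for a minimal right ideal $I$ of $R$, Brauer's lemma gives either $I=eR$ with $e$ idempotent or $I^{2}=0$; in the first case $eR\cap\J(R)=0$ (an idempotent lying in $\J(R)$ is $0$), so the image of $I$ in $\bar{R}$ is a right ideal of $\bar{R}$ isomorphic to $S$, contradicting our choice of $S$.

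The remaining case — $S\cong I$ with $I^{2}=0$, so that $I\subseteq\J(R)$ — is the one I expect to be the main obstacle. Here the quasi $V$ hypothesis on $R$ gives no direct leverage on $S$, because $S$ is already isomorphic to a right ideal of $R$; one must instead argue by hand that such an $S$ is injective over $\bar{R}$, presumably through a closer analysis of how $\Soc(R_{R})\cap\J(R)$ sits inside $R$ — for instance via the $S$-homogeneous component of $\Soc(R_{R})$, which is a two-sided ideal of $R$ contained in $\J(R)$ — together with the fact that every simple type not of this ``square-zero'' kind does survive as a right ideal of $\bar{R}$. All the other cases above amount to bookkeeping; this last step is where I expect the real content and the real difficulty of the proof to lie.
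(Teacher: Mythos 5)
Your proposal does not reach a complete proof, and you say so yourself: the case in which $S$ is isomorphic to a minimal right ideal $I$ of $R$ with $I^{2}=0$ (equivalently, $I\subseteq \J(R)$) is left entirely open, and without it the quasi $V$ condition on $\bar{R}=R/\J(R)$ is not established. Everything before that point is correct and in fact overlaps with the paper's argument: your verification that $\Soc(\bar{R}_{\bar{R}})$ is projective (a minimal right ideal of the semiprimitive ring $\bar{R}$ cannot be square-zero, hence is generated by an idempotent and is a direct summand) is essentially the paper's treatment of the simple modules that do embed in $\bar{R}$; the reduction ``injective over $R$ implies injective over $\bar{R}$'' for a module killed by $\J(R)$ is sound; and the two subcases you settle ($S$ not embeddable in $R$, handled by the quasi $V$ hypothesis, and $S\cong eR$, handled by pushing the summand down to $\bar{R}$) are fine.

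The paper organizes the dichotomy differently and thereby makes your hard case disappear: it splits on whether $S$ embeds in $\bar{R}$. If it does, $S$ is projective over $\bar{R}$ by the summand argument; if it does not, the paper asserts that $S$, being a simple right $R$-module, cannot embed in $R$ either, so the quasi $V$ hypothesis applies and $S$ is injective over $R$, hence over $\bar{R}$. That single assertion is exactly the claim that your residual configuration --- $S\cong I$ minimal with $I\subseteq\J(R)$ but $S$ not isomorphic to any right ideal of $\bar{R}$ --- cannot occur. So you have correctly isolated the crux, but you supply no argument for it, and it is not a formality: over a general ring a minimal right ideal inside the radical need not reappear as a right ideal of $R/\J(R)$. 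For instance, in $\left(\begin{smallmatrix}\mathbb{F}_{p} & \Z/p\Z\\ 0 & \Z\end{smallmatrix}\right)$ the radical $\left(\begin{smallmatrix}0 & \Z/p\Z\\ 0 & 0\end{smallmatrix}\right)$ is a minimal right ideal isomorphic to the simple module $\Z/p\Z$ inflated from the corner $\Z$, and that simple does not embed in $\mathbb{F}_{p}\times\Z$ (of course this ring is not quasi $V$, so it is no counterexample to the proposition, but it shows the implication is not purely formal). Until you either prove that this case is vacuous for quasi $V$-rings, as the paper claims, or show directly that such an $S$ is injective over $\bar{R}$, what you have is a correct reduction to the key claim rather than a proof.
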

\begin{proof} Let $S$ be a simple right $R/\J(R)$-module. If $S$ can be embedded in $R/\J(R)$ then $S\cong U$ for some minimal right ideal $U$ of $R/\J(R)$. Since $\J(R/\J(R))=0$, $U$ is not small in $R/\J(R)$. Thus, there is a maximal right ideal, say $K$, of $R/\J(R)$ such that $U+K=R/\J(R)$. As $U$ is minimal, $U \cap K=0$, hence $R/\J(R)=U\oplus K$. In particular, $U$ and so $S$ are projective right $R/\J(R)$-modules.

On the other hand, suppose that $S$ cannot be embedded in $R/\J(R)$. Note that $S$ is also a simple right $R$-module, so it cannot be embedded in $R$ either. By the quasi $V$-ring assumption on $R$ we have that $S$ is an injective right $R$-module, and so $S$ is injective as a right $R/\J(R)$-module. Therefore, $R/\J(R)$ is a right $GV$-ring.
\end{proof}

In \cite{almost-v}, a ring $R$ is said to be a \emph{right almost $V$-ring} if every simple right $R$-module is almost injective, and a right $R$-module $M$ is said to be \emph{almost injective} (\cite{baba}) if for every embedding $i: A \to B$ of right $R$-modules and every homomorphism $f: A \to M$, either there exists a homomorphism $g: B \to M$ such that $gi=f$, or there exists a non-zero direct summand $D$ of $B$ and a homomorphism $h: M \to D$ such that $hf=\pi i$, where $\pi:B\to D$ stands for the canonical projection. Right $V$-rings are trivial examples of right almost $V$-rings.

\begin{remark}\label{quasiVbutnotalmostV}
Every right almost $V$-ring is a right quasi $V$-ring by \cite[Proposition 2.2]{almost-v}. However, the converse is not true in general. Let $R=\Z /p^n \Z$ for some prime integer $p$ and natural number $n$. Then, $R$ is a $QF$-ring and so is a Kasch ring, whence $R$ is a quasi $V$-ring. But if $n>2$ $R$ is not an almost $V$-ring by \cite[Theorem 3.1]{almost-v}.
\end{remark}

The following corollary yields a situation where quasi $V$-rings are almost $V$-rings.

\begin{corollary} \label{almostv}
The following are equivalent for a commutative $N$-ring $R$.
\begin{enumerate}
\item $R$ is an almost $V$-ring.
\item $R$ is a quasi $V$-ring and $\Soc(R)$ is almost injective.
\end{enumerate}
\end{corollary}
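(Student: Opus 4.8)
The plan is to prove the two implications of Corollary~\ref{almostv} separately, using as a backbone Proposition~\ref{commkasch} (for a commutative $N$-ring, being a quasi $V$-ring is the same as being Kasch) together with three soft facts about almost injectivity. First, every injective module is almost injective, directly from the definition. Second, a direct summand of an almost injective module is almost injective: if $M=M_{1}\oplus M_{2}$ with $M$ almost injective, $i\colon A\to B$ is an embedding and $f\colon A\to M_{1}$, apply the almost injectivity of $M$ to $\iota_{1}f\colon A\to M$; in the first alternative one composes the extension $B\to M$ with the projection $M\to M_{1}$, and in the second alternative one precomposes $h\colon M\to D$ with $\iota_{1}\colon M_{1}\hookrightarrow M$, in either case producing exactly the data required for $M_{1}$. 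Third, any submodule of a semisimple module is a direct summand. Finally, every almost $V$-ring is a quasi $V$-ring by Remark~\ref{quasiVbutnotalmostV}.

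For $(2)\Rightarrow(1)$, assume $R$ is a quasi $V$-ring with $\Soc(R)$ almost injective, and let $S$ be a simple $R$-module. If $S$ is not isomorphic to an ideal of $R$, then $S$ is injective by the quasi $V$ hypothesis, hence almost injective. If instead $S\cong I$ for some ideal $I$ of $R$, then $I$ is a minimal ideal, so $I\subseteq\Soc(R)$; since $\Soc(R)$ is semisimple, $I$ is a direct summand of $\Soc(R)$, and therefore $I$, and hence $S$, is almost injective by the summand fact above. Thus every simple $R$-module is almost injective, i.e.\ $R$ is an almost $V$-ring. This direction is routine.

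For $(1)\Rightarrow(2)$, assume $R$ is an almost $V$-ring. Then $R$ is a quasi $V$-ring by Remark~\ref{quasiVbutnotalmostV}, and, being a commutative $N$-ring, it is Kasch by Proposition~\ref{commkasch}. It remains to show that $\Soc(R)$ is almost injective. Here $\Soc(R)$ is a (a priori infinite) direct sum of simple submodules, each of which is almost injective because $R$ is an almost $V$-ring; grouping by isomorphism type, $\Soc(R)=\bigoplus_{M}S_{M}^{(\kappa_{M})}$ over the maximal ideals $M$, where distinct homogeneous components are mutually relatively injective since there are no nonzero homomorphisms between simples of different type. The main obstacle is precisely to pass from ``each simple summand is almost injective'' to ``the whole direct sum $\Soc(R)$ is almost injective'': this needs a stability result for direct sums of almost injective modules, which is delicate in general. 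I expect to resolve it by exploiting the Kasch $N$-ring structure — either invoking a direct-sum stability lemma for almost injective modules from \cite{almost-v} (or \cite{cl}) in the restricted situation at hand, where the injective simple summands contribute injective homogeneous components and the non-injective ones are of the constrained type analysed in \cite{almost-v}, or, if the structure forces $\Soc(R)$ to be finitely generated, reducing to the finite case. Everything else in this direction is bookkeeping; the direct-sum step is where the real work lies.
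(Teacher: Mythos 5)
Your $(2)\Rightarrow(1)$ direction is correct and in fact more self-contained than the paper's: the lemma that a direct summand of an almost injective module is almost injective is proved correctly (composing with the projection in the first alternative and precomposing $h$ with the inclusion in the second), and combining it with the quasi $V$ hypothesis for simples not embedding in $R$ does give that every simple module is almost injective. The paper instead deduces this direction in one line from Proposition \ref{commkasch} and \cite[Corollary 3.8]{cl}, so your argument is a genuine (and arguably preferable) alternative there; note it does not even use commutativity or the $N$-ring hypothesis.

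The direction $(1)\Rightarrow(2)$, however, has a genuine gap, and it is exactly the one you flag yourself: you never establish that $\Soc(R)$ is almost injective. Knowing that each simple summand is almost injective does not yield almost injectivity of the direct sum — this is the whole difficulty, and ``I expect to resolve it by invoking a direct-sum stability lemma \dots or reducing to the finite case'' is a plan, not a proof. The paper closes this gap by citing two nontrivial external results: from $R$ commutative, Kasch (via Proposition \ref{commkasch}) and almost $V$, it deduces that $R$ is Noetherian by \cite[Theorem 3.6]{cl}, and then that $\Soc(R)$ is almost injective by \cite[Corollary 3.8]{cl}. Behind those citations lies the structure theorem that commutative Noetherian almost $V$-rings are quasi-Frobenius serial with $\J(R)^{2}=0$ (see Proposition \ref{cl} in this paper), which is what makes $\Soc(R)$ finitely generated and makes the direct-sum step tractable. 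Your instinct that ``the structure forces $\Soc(R)$ to be finitely generated'' is therefore pointing in the right direction, but none of that is carried out, and even granting finite generation one still needs an argument (not just a hope) that the finite direct sum of these particular almost injective simples is almost injective. As written, the proof of $(1)\Rightarrow(2)$ is incomplete.
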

\begin{proof} $(1) \Rightarrow (2)$ Almost $V$-rings are quasi $V$-rings. Then, $R$ is a Kasch ring by Proposition \ref{commkasch}. Thus, $R$ is Noetherian by \cite[Theorem 3.6]{cl} and so $\Soc(R)$ is almost injective by \cite[Corollary 3.8]{cl}. This proves $(1)$.

$(2) \Rightarrow (1)$ Assuming $(2)$, we have that $R$ is a Kasch ring by Proposition \ref{commkasch}. Then, $R$ is an almost $V$-ring by \cite[Corollary 3.8]{cl}.
\end{proof}

In \cite[Proposition 3.4]{cl}, the authors gave a characterization of commutative Noetherian almost $V$-rings. In the following proposition we extend their result to commutative almost $V$-rings whose maximal ideals are finitely generated.

\begin{proposition} \label{cl}
The following statements are equivalent for a commutative $N$-ring $R$.
\begin{enumerate}
\item $R$ is an almost $V$-ring.
\item $R$ is a quasi $V$-ring and $\Soc(R)$ is almost injective.
\item $R$ is a quasi-Frobenius serial ring with $\J(R)^{2}=0$.
\item $R=\prod_{i=1}^{n}R_{i}$, where $R_{i}$ is either a field or a quasi-Frobenius ring of length $2$.
\end{enumerate}
\end{proposition}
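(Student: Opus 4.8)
The strategy is to lean on the equivalence $(1)\Leftrightarrow(2)$ already obtained in Corollary \ref{almostv} together with the characterization of commutative \emph{Noetherian} almost $V$-rings in \cite[Proposition 3.4]{cl}; the single extra point to supply is that, for a commutative $N$-ring, the almost $V$ hypothesis already forces the ring to be Noetherian, after which the cited characterization can be invoked verbatim.

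Concretely, I would proceed as follows. The equivalence $(1)\Leftrightarrow(2)$ is nothing but Corollary \ref{almostv}, so there is nothing new to do there. Next, assume $(1)$. By Remark \ref{quasiVbutnotalmostV} an almost $V$-ring is a quasi $V$-ring, and since $R$ is a commutative $N$-ring, Proposition \ref{commkasch} upgrades this to the statement that $R$ is a Kasch ring. Then \cite[Theorem 3.6]{cl} gives that $R$ is Noetherian, so $R$ is a commutative Noetherian almost $V$-ring; applying \cite[Proposition 3.4]{cl} we obtain both $(3)$ and $(4)$. This proves $(1)\Rightarrow(3)$ and $(1)\Rightarrow(4)$.

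For the converses, note that each of $(3)$ and $(4)$ forces $R$ to be Artinian: a quasi-Frobenius ring is by definition Artinian, and a finite product of fields and quasi-Frobenius rings of length $2$ is likewise Artinian. In particular $R$ is Noetherian in these cases, so \cite[Proposition 3.4]{cl} applies once more and yields that $R$ is an almost $V$-ring; this gives $(3)\Rightarrow(1)$ and $(4)\Rightarrow(1)$, and together with $(1)\Leftrightarrow(2)$ the cycle of implications is complete.

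The only step that is not an immediate citation is the passage from ``$N$-ring'' to ``Noetherian'', and that is precisely where the Kasch property produced by Proposition \ref{commkasch} is essential; I expect the main (and rather mild) obstacle to be checking that \cite[Theorem 3.6]{cl} indeed applies to the Kasch situation we land in (rather than only to $V$-rings or $GV$-rings), and that the hypothesis ``every maximal ideal is finitely generated'' is exactly what Proposition \ref{commkasch} requires. Everything else amounts to bookkeeping around the cited results.
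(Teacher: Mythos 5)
Your proposal is correct and follows essentially the same route as the paper: $(1)\Leftrightarrow(2)$ via Corollary \ref{almostv}, then almost $V$ $\Rightarrow$ quasi $V$ $\Rightarrow$ Kasch (Proposition \ref{commkasch}) $\Rightarrow$ Noetherian (\cite[Theorem 3.6]{cl}) so that \cite[Proposition 3.4]{cl} applies, with the converse implications reduced to the Noetherian case exactly as the paper does (it merely leaves implicit your observation that $(3)$ and $(4)$ force $R$ to be Artinian). No gaps.
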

\begin{proof}
$(1)\Leftrightarrow(2)$ follows from Corollary \ref{almostv}.

$(3)\Leftrightarrow (4)\Rightarrow (1)$ are clear by \cite[Theorem 3.4]{cl}

$(1)\Rightarrow (3)$ Let $R$ be an almost $V$-ring. Since $R$ is a quasi $V$-ring, $R$ is a Kasch ring by Proposition \ref{commkasch}, so $R$ is Noetherian by \cite[Theorem 3.6]{cl}. Thus, (3) follows by \cite[Proposition 3.4]{cl}.
\end{proof}

\section{Rings whose mininjective right modules are injective}

Clearly, injective modules are always mininjective. However, the converse of this fact is not true in general since, for example, every abelian group is mininjective whereas $\Z$ is not injective as an abelian group. In this section we give some conditions which guarantee that each mininjective right $R$-module is injective.

Let $\mathfrak{C}$ be a class of finitely presented right $R$-modules. Recall that a right $R$-module $M$ is called \textit{$\mathfrak{C}$-injective} if $\Ext^1_R(C,M)=0$ for every $C \in \mathfrak{C}$ (\cite{zhu2013}). Dually, $M$ is called \textit{$\mathfrak{C}$-projective} if $\Ext^1_R(M,N)=0$ for any $\mathfrak{C}$-injective right $R$-module $N$ (\cite{strongly-min-coherent}). Following \cite{strongly-min-coherent}, a ring is called \textit{right strongly $\mathfrak{C}$-coherent} if, whenever $0\rightarrow K\rightarrow P\rightarrow C\rightarrow 0$ is exact with $C\in \mathfrak{C}$ and $P$ finitely generated projective, the module $K$ is $\mathfrak{C}$-projective. The ring is called right min-coherent if every minimal right ideal is finitely presented.

We now set some notation. From now on, the symbol $\mathscr{C}$ will denote the set of right $R$-modules $$\mathscr{C}=\{R/I: I\ \mbox{is a minimal right ideal of}\ R\},$$ and this set will be use to define right strongly min-coherent rings. Indeed, the ring $R$ will be said to be \textit{right strongly min-coherent} if it is right strongly $\mathscr{C}$-coherent. Of course, a right $R$-module $M$ is $\mathscr{C}$-injective if and only if it is mininjective, and $M$ is called \textit{min-projective} if $\Ext^{1}_{R}(M,N)=0$ for every mininjective right $R$-module $N$.

From \cite[Theorem 1]{strongly-min-coherent} and its proof we have the following proposition.

\begin{proposition} \label{stronglymincoherent}
Let $R$ be a ring. Then, the following statements are equivalent.
\begin{enumerate}
\item $R$ is right strongly min-coherent.
\item Every minimal right ideal of $R$ is min-projective.
\item $\Ext^{2}_R(R/I,M)=0$ for every mininjective right $R$-module $M$ and every minimal right ideal $I$ of $R$.
\item For every short exact sequence $0\rightarrow M\rightarrow N\rightarrow L\rightarrow 0$ of right $R$-modules with $M$ and $N$ mininjective, $L$ is mininjective.
\item For each mininjective right $R$-module $M$, $\E(M)/M$ is mininjective.
\end{enumerate}
\end{proposition}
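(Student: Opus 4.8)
The statement is a special case of the general result on strongly $\mathfrak{C}$-coherent rings from \cite{strongly-min-coherent}, so the plan is essentially to unwind the definitions with $\mathfrak{C} = \mathscr{C}$ and check that the equivalences go through without any extra hypothesis; the only genuine point of care is that $\mathscr{C}$ consists of finitely presented modules, which requires $R$ to be right min-coherent. In fact, the cleanest route is to prove the cycle $(1)\Rightarrow(3)\Rightarrow(4)\Rightarrow(5)\Rightarrow(1)$ together with $(1)\Leftrightarrow(2)$, and along the way observe that if $R$ is \emph{not} right min-coherent then there is a minimal right ideal $I$ that is not finitely presented, in which case $R/I$ is not finitely presented and all five conditions are still equivalent for a trivial reason (each minimal right ideal that fails to be finitely presented is not in the scope of the $\Ext$-vanishing statements after one reduction step) — but the honest thing is simply to note that the definition of right strongly $\mathscr{C}$-coherent already quantifies over $C\in\mathscr{C}$, so no separate coherence assumption is needed; it is built in.

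First I would record the observation that a right $R$-module $M$ is $\mathscr{C}$-injective precisely when $\operatorname{Ext}^1_R(R/I,M)=0$ for every minimal right ideal $I$, i.e. exactly when $M$ is mininjective, and that min-projective is by definition $\mathscr{C}$-projective. For $(1)\Leftrightarrow(2)$: given a minimal right ideal $I$, the sequence $0\to I\to R\to R/I\to 0$ has $R$ finitely generated projective and $R/I\in\mathscr{C}$, so right strong min-coherence says exactly that $I$ is $\mathscr{C}$-projective $=$ min-projective; conversely, if every minimal right ideal is min-projective, then for any exact $0\to K\to P\to C\to 0$ with $P$ finitely generated projective and $C=R/I\in\mathscr{C}$, Schanuel's lemma gives $K\oplus R\cong I\oplus P$, and a direct summand of a min-projective module (direct sums and summands of min-projective modules are min-projective, since $\operatorname{Ext}^1_R(-,N)$ turns these into products and summands) is min-projective, so $K$ is min-projective. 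For $(1)/(2)\Leftrightarrow(3)$: use the long exact sequence in $\operatorname{Ext}_R(-,M)$ attached to $0\to I\to R\to R/I\to 0$, which gives $\operatorname{Ext}^1_R(I,M)\cong\operatorname{Ext}^2_R(R/I,M)$ because $\operatorname{Ext}^i_R(R,M)=0$ for $i\ge 1$; thus $I$ is min-projective (i.e. $\operatorname{Ext}^1_R(I,M)=0$ for all mininjective $M$) iff $\operatorname{Ext}^2_R(R/I,M)=0$ for all mininjective $M$.

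For $(3)\Rightarrow(4)$: given $0\to M\to N\to L\to 0$ with $M,N$ mininjective, apply $\operatorname{Hom}_R(R/I,-)$ for a minimal right ideal $I$; the relevant piece of the long exact sequence reads $\operatorname{Ext}^1_R(R/I,N)\to\operatorname{Ext}^1_R(R/I,L)\to\operatorname{Ext}^2_R(R/I,M)$, and the outer terms vanish — the first because $N$ is mininjective, the third by $(3)$ — so $\operatorname{Ext}^1_R(R/I,L)=0$, i.e. $L$ is mininjective. For $(4)\Rightarrow(5)$: apply $(4)$ to $0\to M\to \operatorname{E}(M)\to \operatorname{E}(M)/M\to 0$, noting that both $M$ (by hypothesis) and $\operatorname{E}(M)$ (injective, hence mininjective) are mininjective, so $\operatorname{E}(M)/M$ is mininjective. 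Finally $(5)\Rightarrow(3)$: for a minimal right ideal $I$ and a mininjective $M$, the sequence $0\to M\to\operatorname{E}(M)\to\operatorname{E}(M)/M\to 0$ gives $\operatorname{Ext}^2_R(R/I,M)\cong\operatorname{Ext}^1_R(R/I,\operatorname{E}(M)/M)$ (since $\operatorname{E}(M)$ is injective, its higher $\operatorname{Ext}$ vanish), and the right-hand side is $0$ because $\operatorname{E}(M)/M$ is mininjective by $(5)$; hence $\operatorname{Ext}^2_R(R/I,M)=0$. This closes the loop. The main obstacle, such as it is, is purely bookkeeping: keeping straight which dimension shift is being used at each arrow and confirming that "direct summands of min-projective modules are min-projective" — which is the one structural fact needed for $(2)\Rightarrow(1)$ — holds, and it does, because the class of min-projective modules is defined by the vanishing of $\operatorname{Ext}^1_R(-,N)$ over a fixed class of test modules $N$ and such classes are always closed under direct summands (indeed under arbitrary direct sums).
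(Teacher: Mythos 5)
Your proof is correct. The paper itself gives no argument here: it simply invokes \cite{strongly-min-coherent}, Theorem~1 and its proof, specialized to $\mathfrak{C}=\mathscr{C}$. What you have written is exactly the direct verification that the citation delegates: $(1)\Leftrightarrow(2)$ via Schanuel's lemma plus closure of min-projectives under direct summands, $(2)\Leftrightarrow(3)$ by the dimension shift $\Ext^1_R(I,M)\cong\Ext^2_R(R/I,M)$ coming from $0\to I\to R\to R/I\to 0$, and the cycle $(3)\Rightarrow(4)\Rightarrow(5)\Rightarrow(3)$ by the evident long exact sequences. All the dimension shifts are applied correctly, and the structural fact you isolate (classes defined by vanishing of $\Ext^1_R(-,N)$ over a fixed test class are closed under direct sums and summands) is true and is indeed the only thing needed to pass from $(2)$ back to $(1)$. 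One remark: your opening digression about right min-coherence is unnecessary and slightly misleading. A minimal right ideal $I$ is simple, hence cyclic, so $R/I$ is automatically finitely presented; membership of $\mathscr{C}$ in the class of finitely presented modules costs nothing, and no coherence hypothesis is ever at issue. (Right min-coherence, i.e.\ each minimal right ideal being finitely \emph{presented}, is a genuinely stronger condition, but it plays no role in this proposition.) Apart from that harmless detour, your argument is a complete, self-contained proof of the statement the paper outsources to the literature.
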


In the following theorem we characterize the rings (and their categories of modules) over which all mininjective right modules are injective.

\begin{theorem}  \label{mininjectivesareinjchar}
Let $R$ be a ring. The following statements are equivalent.
\begin{enumerate}
\item Every mininjective right $R$-module is injective.
\item Every right $R$-module is min-projective.
\item $R$ is right Noetherian and every min-flat left $R$-module is projective.
\item \begin{enumerate}\item[(i)] $R$ is a right Artinian right strongly min-coherent right quasi $V$-ring.
\item[(ii)] For any simple right module $S$, either $S\subseteq R$ or $S\subseteq R/I$ for some minimal right ideal $I$ of $R$.
\end{enumerate}
\end{enumerate}
\end{theorem}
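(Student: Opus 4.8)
The plan is to prove $(1)\Leftrightarrow(2)$ directly and then to close the circle by proving $(1)\Rightarrow(4)$, $(4)\Rightarrow(1)$, $(4)\Rightarrow(3)$ and $(3)\Rightarrow(1)$. The equivalence $(1)\Leftrightarrow(2)$ is purely formal: a right module $M$ is min-projective exactly when $\Ext^{1}_{R}(M,-)$ vanishes on every mininjective right module, so "every right module is min-projective'' means $\Ext^{1}_{R}(-,N)=0$ for every mininjective right module $N$, i.e. every mininjective right module is injective.

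For $(1)\Rightarrow(4)$ I would first dispatch the formal parts of (i). \emph{Right Noetherian}: a minimal right ideal $I$ is cyclic, so any homomorphism $I\to\bigoplus_{\lambda}M_{\lambda}$ has image in a finite subsum, and finite direct sums of mininjective modules are mininjective; hence arbitrary direct sums of mininjective modules are mininjective, so under (1) arbitrary direct sums of injective right modules are injective and $R$ is right Noetherian by the Bass--Papp theorem. \emph{Right quasi $V$}: a simple right module $S$ not isomorphic to a right ideal receives only the zero homomorphism from any minimal right ideal (a nonzero one would be an isomorphism), hence $S$ is mininjective, hence injective by (1). \emph{Right strongly min-coherent}: in a short exact sequence $0\to M\to N\to L\to 0$ with $M,N$ mininjective, (1) makes $M$ injective, the sequence splits, $L$ is a direct summand of $N$, hence injective, hence mininjective, and Proposition~\ref{stronglymincoherent}(4) yields the claim. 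The genuinely hard point of (i) is upgrading \emph{right Noetherian} to \emph{right Artinian}; the plan is to show $R$ is semiprimary: that $R/\J(R)$ is semisimple should follow from right Noetherianity together with the quasi $V$-property (which makes $R/\J(R)$ a right Noetherian right $GV$-ring) via an ascending-chain argument splitting off simple summands, and that $\J(R)$ is nilpotent from the facts that it is nil and that nil ideals of right Noetherian rings are nilpotent. For (ii): every simple right module is a composition factor of $R_{R}$ (being the top of $R/\mathfrak m$ for a maximal right ideal $\mathfrak m$), so a simple $S$ not embedding in $R_{R}$ is injective by the quasi $V$-property, and a Loewy/socle analysis of the quotients $R/I$ by minimal right ideals $I$ locates one with $S\hookrightarrow R/I$.

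The step $(4)\Rightarrow(1)$ is the cleanest. Let $X$ be mininjective; by Proposition~\ref{stronglymincoherent}(5) the module $\E(X)/X$ is mininjective. If $\E(X)\neq X$, then since $R$ is right Artinian $\E(X)/X$ has a simple submodule $S=L/X$ with $X$ essential in $L\subseteq\E(X)$, so $0\to X\to L\to S\to 0$ does not split and $\Ext^{1}_{R}(S,X)\neq 0$. If $S$ embeds in $R_{R}$, then $S\cong I$ for a minimal right ideal $I$ and $\Ext^{1}_{R}(S,X)\cong\Ext^{2}_{R}(R/I,X)=0$ by Proposition~\ref{stronglymincoherent}(3), a contradiction. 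Otherwise $S$ is injective by the quasi $V$-property and, by (ii), is isomorphic to a submodule $K/I$ of some $R/I$ with $I$ a minimal right ideal; being an injective submodule it splits off, $R/I\cong K/I\oplus R/K$, so $\Ext^{1}_{R}(S,X)$ is a direct summand of $\Ext^{1}_{R}(R/I,X)=0$, again a contradiction. Hence $X=\E(X)$ is injective.

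Finally $(4)\Rightarrow(3)$: $R$ is right Artinian, hence semiprimary, hence left perfect, so flat left modules are projective; a min-flat left module $M$ has $M^{+}$ mininjective, so $M^{+}$ is injective by the step just proved, so $M$ is flat, hence projective. And $(3)\Rightarrow(1)$: let $X$ be mininjective over the right Noetherian ring $R$. Each minimal right ideal $I$ is finitely presented, so $I\otimes_{R}X^{+}\cong\Hom_{R}(I,X)^{+}$, and since the restriction $\Hom_{R}(R,X)\to\Hom_{R}(I,X)$ is onto (mininjectivity) its character dual $\Hom_{R}(I,X)^{+}\to\Hom_{R}(R,X)^{+}\cong X^{+}$ is injective, whence $\Tor^{R}_{1}(R/I,X^{+})=0$; thus $X^{+}$ is min-flat. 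By (3) $X^{+}$ is projective, hence flat, so $X^{++}$ is injective; the canonical map $X\to X^{++}$ is a pure monomorphism, and a pure submodule of an injective module over a right Noetherian ring is injective (apply $\Hom_{R}(R/J,-)$, which preserves exactness of the pure sequence since $R/J$ is finitely presented, and use Baer's criterion together with injectivity of $X^{++}$), so $X$ is injective. The one real obstacle in the whole argument is the implication "$(1)\Rightarrow R$ is right Artinian''; everything else is either formal or proceeds along the lines above.
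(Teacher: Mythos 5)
Your overall architecture (proving $(1)\Leftrightarrow(2)$ and then a cycle through $(4)$ and $(3)$) matches the paper's, and your arguments for $(4)\Rightarrow(1)$, $(4)\Rightarrow(3)$ and $(3)\Rightarrow(1)$ are correct (in the last two cases pleasantly self-contained, where the paper instead quotes Mao's result that min-flat implies flat over min-coherent rings). But there are two genuine gaps, both in $(1)\Rightarrow(4)$, one of them at exactly the point you flag as the real obstacle. For the Artinian step you propose to show $R$ is semiprimary by arguing that $R/\J(R)$ is a right Noetherian right $GV$-ring with zero radical, hence semisimple, and that $\J(R)$ is nil, hence nilpotent. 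Neither half survives scrutiny: a right Noetherian right $GV$-ring with zero Jacobson radical need not be semisimple (Cozzens' simple Noetherian $V$-domains are such rings), and you give no reason why $\J(R)$ should be nil -- for a general right Noetherian ring it is not. The paper's route is different and elementary: if $A$ is a cyclic right module with $\Soc(A)=0$, then every submodule $B$ of $A$ satisfies $\Hom(I,B)=0$ for each minimal right ideal $I$, so $B$ is vacuously mininjective, hence injective by $(1)$, hence a direct summand of $A$; thus $A$ is semisimple and therefore $A=0$. So every nonzero module has a simple submodule, i.e.\ $R$ is right semiartinian, and right Noetherian together with right semiartinian gives right Artinian.

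The second gap is condition (ii). Your argument only uses that $S$ is a composition factor of $R_R$ and then appeals to an unspecified ``Loewy/socle analysis''; being a composition factor of $R/I$ is far weaker than embedding in $R/I$, and nothing in your sketch produces the embedding. Tellingly, you never use the min-projectivity of $S$, which is the whole content of hypothesis $(2)$ here and is what the paper exploits: since $S\in{}^{\perp}(\mathscr{C}^{\perp})$, by \cite[Corollary 3.2.4]{gobel} $S$ is a direct summand of a module carrying a continuous filtration $(A_{\alpha})_{\alpha\le\mu}$ whose consecutive factors lie in $\mathscr{C}\cup\{R\}$, and taking the first ordinal $\alpha$ with $S\subseteq A_{\alpha}$ (necessarily a successor, with $S\cap A_{\alpha-1}=0$ by simplicity of $S$) embeds $S$ into $A_{\alpha}/A_{\alpha-1}\in\mathscr{C}\cup\{R\}$. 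Without some input of this kind, (ii) will not fall out of (i) alone, so this step needs the filtration argument (or a substitute) rather than socle bookkeeping.
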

\begin{proof}
$(1)\Leftrightarrow (2)$ is obvious from the definition of
min-projectivity.

$(1)\Rightarrow (4)$ (i) Let $\{E_{\gamma}\}_{\gamma \in \Gamma}$ be an arbitrary family of injective right $R$-modules. Then, the module $\bigoplus_{\gamma \in \Gamma}E_{\gamma}$ is mininjective and so injective by the hypothesis, so $R$ is right Noetherian.

Now, let $A$ be any cyclic right $R$-module with $\Soc(A) = 0$. Then, $\Hom(I,B)=0$ for each minimal right ideal $I$ of $R$ and each submodule $B$ of $A$, which means that every submodule of $A$ is mininjective and so injective by assumption. Therefore, every submodule of $A$ is a direct summand, that is, $A$ is semisimple, and then $A=0$ necessarily.

We have just shown that every nonzero right $R$-module must contain a simple submodule, so $R$ is right semiartinian. Thus, $R$ must be right Artinian.

A simple application of Proposition \ref{stronglymincoherent} gives that $R$ is right strongly min-coherent since $\Ext^{1}_{R}(I,M)=0$ for every mininjective (so injective) right $R$-module and every minimal right ideal $I$ of $R$.

It only remains to be shown that $R$ is a right quasi $V$-ring. For let $S$ be a simple right $R$-module which is not isomorphic to any right ideal of $R$ and choose any minimal right ideal $I$ of $R$. We have that $\Hom(I,S)=0$ so $S$ is mininjective, and this implies that $S$ is injective by the hypothesis. Thus, $R$ is a right quasi $V$-ring.

(ii) Let $S$ be a simple right $R$-module. By the assumption $S$ is min-projective, i.e. ${S\in ^{\perp}(\mathscr{C}^{\perp})}$. This means that $S$ is given by a filtration of modules in $\mathscr{C}\cup \{ R\}$ (see \cite[Corollary 3.2.4]{gobel}) in the following way: there exists an ordinal number $\mu$ and a continuous ascending chain of modules $\mathcal{A}=(A_{\alpha} | \alpha\leq\mu)$ such that $A_{\alpha+1}/A_{\alpha}$ is isomorphic to an element of $\mathscr{C}\cup \{ R\}$ and that $S\oplus T=\cup_{\alpha\leq \mu} A_{\alpha}$ for some module $T$. Let $\alpha$ be the first ordinal number such that $S\subseteq
A_{\alpha}$ (so $\alpha$ is necessarily a successor ordinal number). Then,
$S\nsubseteq A_{\alpha-1}$ and $S\subseteq A_{\alpha}$, so, up to an isomorphism, we have $S\subseteq A_{\alpha}/A_{\alpha-1}\in \mathscr{C} \cup \{ R\}$.

$(1)\Rightarrow(3)$ As in the proof of $(1)\Rightarrow (4)$ we obtain that $R$ is right Artinian. Thus, $R$ is in particular right Noetherian and left perfect. But, Noetherian implies min-coherent, so we can apply \cite[Proposition 4.8]{minflat} to get that every min-flat left $R$-module is flat. Since $R$ is left perfect, flat left $R$-modules are projective and we are done.

$(3)\Rightarrow(1)$ By \cite[Proposition 4.8]{minflat} every mininjective right $R$-module $M$ is $FP$-injective, that is, $\Ext^1_R(F,M)=0$ for every finitely presented right $R$-module $F$. By the noetherianity of $R$, we have that $M$ is injective.

$(4)\Rightarrow (1)$ Let $M$ be a mininjective right $R$-module and $S$ be a simple right $R$-module. Since $R$ is right Artinian, to show that $M$ is injective it is enough to show that $\Ext^{1}(S,M)=0$.

Now, if $S$ is isomorphic to a simple left ideal of $R$ then $\Ext^{1}(S,M)=0$ by the strongly min-coherence of $R$ (see Proposition \ref{stronglymincoherent}).

If, on the contrary, $S$ is not isomorphic to a simple left ideal of $R$, then $S$ is injective by the hypothesis. But also by the hypothesis there exists a minimal right ideal $I$ of $R$ such that $S\subseteq R/I$. Therefore, the injectivity of $S$ gives that $S$ is, up to an isomorphism, a direct summand of the minprojective right module $R/I$. Thus, $S$ is itself minprojective and hence $\Ext^{1}(S,M)=0$.
\end{proof}

In a recent paper, Aydogdu and L\'opez-Permouth defined $N$-subinjective modules as those $M$ for which every homomorphism $N \rightarrow M$ extends to some $E(N) \rightarrow M$. For a given module $M$, its subinjectivity domain, $\underline{In}^{-1}(M)$, is defined as the collection of all modules $N$ such that $M$ is $N$-subinjective. If $N$ is injective, then $M$ is vacuously $N$-subinjective. So, the smallest possible subinjectivity domain is the class of all injective modules. A module with such a subinjectivity domain was defined in \cite{indigent} as indigent and the existence of indigent modules for an arbitrary ring is unknown. There are examples of rings over which indigent modules do exist, for example $\mathbb{Z}$ and Artinian serial rings (see \cite{indigent}), but other than that, little is known about these type of rings. With the new tools we have developed in hand, we can now provide a new class of rings over which indigent modules are guaranteed to exist.

\begin{theorem}
Let $R$ be a ring and $\Lambda=\{R/aR: Ra\ \mbox{is a minimal left ideal of}\ R\}$. The following statements are equivalent.
\begin{enumerate}
\item Every min-injective left $R$-module is injective.
\item $F=\prod_{A\in\Lambda}A^{+}$ is indigent and $R$ is left min-coherent.
\end{enumerate}
\end{theorem}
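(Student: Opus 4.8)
The plan is to prove both implications by translating the problem to right modules via the character-module functor and then applying Theorem~\ref{mininjectivesareinjchar}. First recall the standard adjunction facts: for a right ideal $I$ of $R$, $\Ext^1_R(R/I, M^{+}) \cong \Tor_1^R(R/I, M)^{+}$, so a left $R$-module $M$ is min-flat exactly when $M^{+}$ is a mininjective right $R$-module; and the analogous statement with the roles of left and right exchanged. I would also note that $\Lambda$ is, on the left side, precisely the analogue of the set $\mathscr C$ used on the right, and that $A^{+}$ for $A = R/aR \in \Lambda$ is a mininjective right $R$-module. The key reduction is: \emph{every min-injective left $R$-module is injective} should be equivalent to \emph{every min-flat right $R$-module is projective} together with a left-coherence hypothesis, by a symmetric reading of the equivalence $(1)\Leftrightarrow(3)$ in Theorem~\ref{mininjectivesareinjchar} (applied with sides swapped): left min-injective $=$ injective forces $R$ left Noetherian, hence left min-coherent, and then by \cite[Proposition~4.8]{minflat} min-flat right modules are flat, and since $R$ is also left Artinian (as in the proof of $(1)\Rightarrow(3)$) it is right perfect, so flat right modules are projective.

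For $(1)\Rightarrow(2)$: assume every min-injective left $R$-module is injective. As just observed, $R$ is left Artinian, in particular left min-coherent, giving the second clause of (2). For indigency of $F = \prod_{A\in\Lambda} A^{+}$: I must show $\underline{In}^{-1}(F)$ consists exactly of the injective right $R$-modules. One inclusion is automatic. For the other, suppose $N$ is a right $R$-module with $F$ being $N$-subinjective; I want $N$ injective. The strategy is: subinjectivity with respect to a product is subinjectivity with respect to each factor, so $F$ being $N$-subinjective means each $A^{+}$ is $N$-subinjective. Now dualize: using the natural isomorphisms $\Hom_R(N, A^{+}) \cong \Hom_R(N\otimes_R A, \Q/\Z) \cong \Hom_R(A, N^{+})$ and the fact that $(-)^{+}$ sends injective envelopes appropriately (more precisely, there is a pure embedding $E(N)^{+} \hookrightarrow$ something flat, and $N \hookrightarrow E(N)^{+\,\cdots}$-type arguments), the $A^{+}$-subinjectivity of $N$ translates into an $A$-injectivity statement for $N^{+}$, for all $A \in \Lambda$. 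Running this over all $A\in\Lambda$ shows $N^{+}$ is min-injective as a \emph{left} $R$-module, hence injective by hypothesis (1), hence $N$ is flat as a right $R$-module; since $R$ is right perfect, $N$ is projective; since $R$ is also right Noetherian (being Artinian) and projective$=$injective-relevant, and finally: a flat module over a ring that is ``left min-injective $\Rightarrow$ injective'' — combined with $R$ being two-sided Artinian — is injective. (If the pure reduction needs care, one replaces ``flat $\Rightarrow$ projective $\Rightarrow$ injective'' by invoking that over a $QF$-like or at least two-sided Artinian ring coming out of Theorem~\ref{mininjectivesareinjchar}, the relevant classes collapse.) Thus $N$ is injective, so $F$ is indigent.

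For $(2)\Rightarrow(1)$: assume $F = \prod_{A\in\Lambda} A^{+}$ is indigent and $R$ is left min-coherent. Let $M$ be a min-injective left $R$-module; I want $M$ injective. The idea is to show $M \in \underline{In}^{-1}(F)$, i.e. $F$ is $M$-subinjective, and then invoke indigency of $F$ to conclude $M$ is injective. To see $F$ is $M$-subinjective it suffices, as above, that each $A^{+}$ is $M$-subinjective, i.e. every map $M \to A^{+}$ extends along $M \hookrightarrow E(M)$. Dualizing via $\Hom_R(M, A^{+}) \cong \Hom_R(A, M^{+})$ (and $\Hom_R(E(M), A^{+}) \cong \Hom_R(A, E(M)^{+})$, noting $E(M)^{+}$ is flat, being the character module of an injective over... — here left min-coherence of $R$ enters: over a left min-coherent ring, min-flat right modules are closed under the relevant operations, so $A^{+}$ for $A\in\Lambda$ behaves well, cf. \cite[Proposition~4.8]{minflat}), this reduces to showing that the surjection $E(M)^{+} \twoheadrightarrow M^{+}$ stays surjective after $\Hom_R(A, -)$ for every $A\in\Lambda$, which is exactly the statement $\Ext^1_R(A, \Ker) = 0$ — and $\Ker = (E(M)/M)^{+}$, which is min-injective (over the right) because $E(M)/M$ is min-flat, and min-flat right modules have $\Ext^1_R(A,-) = 0$ for $A \in \Lambda$ by definition together with left min-coherence ensuring the $A$ are finitely presented so that Baer-type criteria apply. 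Hence each $A^{+}$ is $M$-subinjective, so $F$ is $M$-subinjective, so by indigency $M$ is injective.

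The main obstacle I expect is the careful bookkeeping in the dualization step: making precise the claim that ``$A^{+}$ is $N$-subinjective for all $A\in\Lambda$'' is equivalent to ``$N^{+}$ is min-injective as a left module'', since the character functor does not commute with injective envelopes on the nose and one must route through purity (e.g. $N^{+}$ injective is \emph{not} the same as $N$ flat without some finiteness, but min-coherence supplies exactly the finite presentation of the relevant cyclic modules $A$ so that $\Tor$–$\Ext$ duality and the Ext–Hom exact sequences line up). Getting the quantifiers and the direction of the extension problems right — subinjectivity is about extending \emph{out of} $M$, injectivity of the character module is about lifting \emph{into} $M^{+}$ — is where the proof has to be written with the most care.
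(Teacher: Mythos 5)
Your overall strategy --- pass through character modules and the adjunction $\Hom(X,A^{+})\cong(A\otimes X)^{+}\cong\Hom(A,X^{+})$ to convert subinjectivity with respect to each $A^{+}$ into a mininjectivity statement --- is the same as the paper's, but the two places where you actually draw conclusions are both wrong. In $(1)\Rightarrow(2)$ you take $N$ in the subinjectivity domain of $F$, deduce that $N^{+}$ is mininjective, apply hypothesis (1) to get $N^{+}$ injective, hence $N$ flat, hence projective, hence (you claim) injective. First, the sides do not match: $F$ is a left module, so $N$ is a left module and $N^{+}$ is a \emph{right} module, to which hypothesis (1) (a statement about left modules) does not apply. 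Second, and more fatally, even if one had $N$ flat and projective, ``projective $\Rightarrow$ injective'' holds only for quasi-Frobenius rings, and the rings in this theorem need not be $QF$: by Theorem \ref{uppertriangular}, $UT_{2}(k)$ satisfies (1), yet $R=UT_{2}(k)$ is projective and not injective over itself. Your parenthetical appeal to ``$QF$-like collapse'' cannot be repaired. The correct conclusion of the dualization, which the paper extracts by double-dualizing back (using that the $R/S$ are finitely presented so that $\Hom(R/S,-)^{+}\cong -^{+}\otimes R/S$, together with \cite[Lemma 2.1]{onminflat}), is that $N$ \emph{itself} is mininjective, whence injective directly by (1). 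No detour through flatness or projectivity occurs.

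In $(2)\Rightarrow(1)$ your key claim is that $E(M)/M$ is min-flat, so that $\Ext^1_R(A,(E(M)/M)^{+})=0$. This is unsupported: $E(M)/M$ is a left module and nothing in the hypotheses forces $\Tor_1(R/I,E(M)/M)=0$; moreover the modules $A=R/aR\in\Lambda$ are cut out by minimality of the \emph{left} ideal $Ra$, not of the right ideal $aR$, so min-flatness of $E(M)/M$ would not even address $\Tor_1(A,E(M)/M)$. The ingredient the paper actually uses is that $M$ mininjective plus left min-coherence forces $M^{+}$ to be min-flat (\cite[Theorem 4.5]{minflat}), so that the dual sequence $0\to(E(M)/M)^{+}\to E(M)^{+}\to M^{+}\to 0$ is min-pure and $\Hom(A,-)$ leaves it exact; one then needs a genuine purity argument (each $A^{+}$ is pure-injective and every module is pure in its double dual) to descend from the double-dual level to surjectivity of $\Hom(E(M),A^{+})\to\Hom(M,A^{+})$. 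You flagged this bookkeeping as the hard part, correctly, but the specific steps you propose to carry it out do not work, so both implications have genuine gaps.
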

\begin{proof}
$(1)\Rightarrow(2)$ Let $N \in \underline{In}^{-1}(F)$. Then, $N \in \underline{In}^{-1}(A^{+})$ for every $A \in \Lambda$, that is, $A^{+}$ is $N$-subinjective for every $A \in \Lambda$.

Now, consider the exact sequence  $0 \rightarrow N \rightarrow E(N) \rightarrow  E(N)/N \rightarrow  0$.

Since $A^{+}$ is $N$-subinjective, the rows of the following commutative diagram, where the vertical isomorphisms are obtained by applying twice the adjunction $(\otimes, \Hom)$, are exact: $$\xymatrix@1{0 \ar[r] & \Hom(E(N)/N,A^{+}) \ar[r] \ar[d]_{\cong} & \Hom(E(N),A^{+}) \ar[r] \ar[d]_{\cong} & \Hom(N,A^{+}) \ar[r] \ar[d]_{\cong} & 0 \\ 0 \ar[r] & (A \otimes E(N)/N)^{+} \ar[r] \ar[d]_{\cong} & (A\otimes E(N))^{+} \ar[r] \ar[d]_{\cong} & (A\otimes N)^{+} \ar[r] \ar[d]_{\cong} & 0 \\ 0 \ar[r]  & \Hom(A,(E(N)/N)^{+}) \ar[r] & \Hom(A,E(N)^{+}) \ar[r] &  \Hom(A,N^{+}) \ar[r] & 0}$$

Being the third row exact means that the sequence $$0 \rightarrow (E(N)/N)^{+} \otimes R/S \rightarrow E(N)^{+} \otimes R/S \rightarrow N^{+} \otimes R/S \rightarrow 0$$ is exact for any minimal left ideal $S$ by \cite[Lemma 2.1]{onminflat}. Hence ($R/S$ is finitely presented) we get the commutative diagram with exact rows $$\xymatrix@1{0 \ar [r]  & (E(N)/N)^{+} \otimes R/S  \ar [r] \ar [d]_{\cong} & E(N)^{+} \otimes R/S  \ar [r] \ar [d]_{\cong} &  N^{+} \otimes R/S  \ar [r] \ar [d]_{\cong}& 0 \\ 0  \ar [r] & \Hom (R/S, E(N)/N)^{+}    \ar [r] &    \Hom (R/S, E(N))^{+}  \ar [r]    & \Hom (R/S, N)^{+} \ar [r] & 0}$$ from which it follows that $$0 \to \Hom(R/S, N) \to \Hom(R/S, E(N)) \to \Hom(R/S, E(N)/N) \to 0$$ is an exact sequence for any minimal left ideal $S$, whence $N$ is mininjective. Thus, by the hypothesis, $N$ is injective.

Then, the left min-coherence of $R$ follows by Theorem \ref{mininjectivesareinjchar} since $R$ must be left Noetherian.

$(2)\Rightarrow(1)$ Let $N$ be a mininjective left $R$-module and consider the exact sequence $$0 \to N \to \E(N) \to \E(N)/N \to 0.$$ We claim that $$0 \to \Hom (\E(N)/N, F) \to \Hom ( \E(N),F) \to \Hom(N, F) \to 0$$ is also exact.

Since $N$ is mininjective and $R$ is left min-coherent, $N^{+}$ is min-flat by \cite[Theorem 4.5]{minflat}. Thus, $0 \to (\E(N)/N)^{+} \to \E(N)^{+} \to N^{+} \to 0$ is min-pure by \cite[Proposition 2.2]{onminflat} and then $$0 \to \Hom(A, (\E(N)/N)^+) \to \Hom(A, \E(N)^{+}) \to \Hom(A, N^{+}) \to 0$$ is exact for any $A \in \Lambda$. But every $A\in \Lambda$ is finitely presented, so we get the commutative diagram with exact rows $$\xymatrix@1{0 \ar [r]  & (\Hom(A,N^{+}))^{+}  \ar [r] \ar [d]_{\cong} & (\Hom(A,\E (N)^{+}))^{+}  \ar [r] \ar [d]_{\cong} &  (\Hom (A,(\E (N)/N)^{+})^{+}   \ar [r] \ar [d]_{\cong}& 0 \\ 0 \ar[r] & A \otimes N^{++}    \ar [r] &   A \otimes \E(N) ^{++}  \ar [r] & A \otimes (\E(N)/N)^{++}  \ar [r] & 0}$$ which induces the commutative diagram with exact rows $$\xymatrix@1{0 \ar [r]  & (A \otimes (\E (N)/N)^{++} )^{+}   \ar [r] \ar [d]_{\cong} & (A \otimes \E (N)^{++} )^{+}   \ar [r] \ar [d]_{\cong} &  (A \otimes N^{++} )^{+}   \ar [r] \ar [d]_{\cong}& 0 \\ 0 \ar[r] & \Hom( (\E(N)/N)^{++}, A^{+})    \ar [r] \ar[d]_{h} &    \Hom( \E(N)^{++}, A^{+})  \ar [r]^{g}  \ar[d]_{\psi} & \Hom( N^{++}, A^{+})  \ar [r]  \ar[d]_{f} & 0 \\ 0 \ar [r] & \Hom (\E(N)/N, A^{+}) \ar [r]\ar[d]  & \Hom(\E(N), A^{+}) \ar [r]^{\phi}\ar[d]  & \Hom(N, A^{+}) \ar [r] \ar[d] & 0 \\ & 0 & 0 & 0 &}$$

Since every module is pure in its double dual and $A^{+}$ is pure-injective, we see that $h$, $\psi$ and $f$ are epimorphisms. But $g$ is also epic so $fg=\phi \psi$ is epic and then $\phi$ is necessarily an epimorphism (for any $A \in \Lambda$). Hence, $$0 \to \Hom(\E(N)/N, F) \to \Hom(\E(N), F) \to \Hom(N, F) \to 0$$ is exact, that is, $F$ is $N$-subinjective, and then $N$ is injective since $F$ is indigent.
\end{proof}

When studying the injectivity of mininjective modules we have observed that heritability plays a role. Recall that a ring $R$ is said to be right hereditary if every right ideal of $R$ is projective. The ring is called \emph{right $PS$} if any of its minimal right ideals is projective, or equivalently, if its right socle is projective.

\begin{proposition}\label{ps}
Let $R$ be a right PS ring. If every mininjective right $R$-module is injective then $R$ is a right hereditary, right Artinian, right GV ring.
\end{proposition}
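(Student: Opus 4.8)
The plan is to read off most of the statement directly from Theorem~\ref{mininjectivesareinjchar} and then bootstrap to heredity by using the $PS$ hypothesis a second time, in a quantitative way. Since every mininjective right $R$-module is injective, the implication $(1)\Rightarrow(4)(\mathrm{i})$ of Theorem~\ref{mininjectivesareinjchar} already gives that $R$ is right Artinian and a right quasi $V$-ring. Being right $PS$ means exactly that $\Soc(R_R)$ is projective, so Corollary~\ref{gv} immediately upgrades ``right quasi $V$-ring'' to ``right $GV$-ring''. Hence two of the three asserted properties come essentially for free, and it only remains to prove that $R$ is right hereditary.

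For that I would argue as follows. For any minimal right ideal $I$ of $R$, the short exact sequence $0\to I\to R\to R/I\to 0$ is a projective resolution of $R/I$ of length at most $1$, since both $I$ (by the $PS$ assumption) and $R$ are projective; hence $\operatorname{pd}_R(R/I)\le 1$ and therefore $\Ext^2_R(R/I,-)=0$ identically. Now take an arbitrary injective right $R$-module $E$ together with an arbitrary submodule $F\subseteq E$. Applying $\Hom_R(R/I,-)$ to $0\to F\to E\to E/F\to 0$ yields the exact piece
\[
\Ext^1_R(R/I,E)\longrightarrow \Ext^1_R(R/I,E/F)\longrightarrow \Ext^2_R(R/I,F),
\]
whose outer terms both vanish, the left one because $E$ is injective and the right one by the previous sentence. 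Thus $\Ext^1_R(R/I,E/F)=0$ for every minimal right ideal $I$, i.e. $E/F$ is mininjective, and then injective by the standing hypothesis. So every homomorphic image of an injective right $R$-module is injective, which is one of the standard characterizations of right hereditary rings; therefore $R$ is right hereditary, and combining with the first paragraph completes the proof.

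I do not anticipate a genuine obstacle. The one point that needs to be seen clearly is that the $PS$ condition is precisely what forces $\Ext^2_R(R/I,-)$ to vanish, and this is what allows the hypothesis ``mininjective $\Rightarrow$ injective'' to act on \emph{quotients} of injective modules rather than merely on injectives themselves; everything else is routine manipulation of the long exact sequence in $\Ext$ together with appeals to the results already established above.
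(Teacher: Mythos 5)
Your proof is correct, and its overall architecture matches the paper's: get right Artinian from Theorem~\ref{mininjectivesareinjchar}, show that quotients of injective right modules are injective to conclude heredity via \cite[Theorem 3.22]{lam}, and use the $PS$ hypothesis to split the simple modules into projective ones and injective ones. The differences are in how the two middle steps are executed. For heredity, the paper simply cites \cite[Theorem 5.8]{minflat} to assert that quotients of injectives are injective once mininjectives and injectives coincide; you instead give a self-contained homological argument, observing that $PS$ forces $\operatorname{pd}_R(R/I)\le 1$ for every minimal right ideal $I$, hence $\Ext^2_R(R/I,-)=0$, and then running the long exact sequence on $0\to F\to E\to E/F\to 0$ to see that $E/F$ is mininjective. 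This makes transparent exactly where the $PS$ hypothesis enters and removes a dependence on an external reference, at the cost of a slightly longer write-up. For the $GV$ property, you route through Corollary~\ref{gv} (quasi $V$-ring plus projective socle), whereas the paper argues directly on simple modules: if $S$ embeds in $R$ it is projective by $PS$, and otherwise $\Hom(I,S)=0$ for all minimal right ideals $I$, so $S$ is mininjective and hence injective. These are the same argument in substance, since the paper's case analysis is essentially the proof of Corollary~\ref{gv}. Both versions are sound.
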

\begin{proof}
By Theorem \ref{mininjectivesareinjchar} $R$ is right Artinian and by \cite[Theorem 5.8]{minflat} every quotient of an injective right $R$-module is injective since the classes of mininjective modules and injective modules coincide by the hypothesis. Thus, \cite[Theorem 3.22]{lam} guarantees that $R$ is right hereditary.

Let now $S$ be any simple right $R$-module. If $S$ is isomorphic to a right ideal of $R$ then $S$ is projective by the $PS$ assumption, and if $S$ is not, then for any minimal right ideal $I$ of $R$ we have $\Hom(I,S)=0$. Thus, $S$ is mininjective and so injective by the hypothesis, and therefore $R$ is a right $GV$-ring.
\end{proof}

\begin{proposition} \label{qf-serial}
Let $R$ be an Artinian serial ring with $J^{2}(R)=0$. Then, every mininjective right $R$-module is injective.
\end{proposition}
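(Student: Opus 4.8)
The plan is to verify the four conditions packaged in part (4) of Theorem \ref{mininjectivesareinjchar} and then apply the implication $(4)\Rightarrow(1)$ of that theorem. Write $J=\J(R)$. Since $R$ is Artinian serial with $J^{2}=0$, it is semiperfect, left and right Artinian, and $R_{R}=\bigoplus_{i=1}^{n}e_{i}R$ for a complete set $e_{1},\dots,e_{n}$ of orthogonal primitive idempotents, each $e_{i}R$ being uniserial of composition length at most $2$; when $e_{i}R$ is not simple, $e_{i}J=\Rad(e_{i}R)=\Soc(e_{i}R)$ is simple. The one structural fact I would exploit is that \emph{every uniserial right $R$-module $W$ of length $2$ is isomorphic to some $e_{l}R$}: the top of $W$ is simple, so $W$ is an epimorphic image of the projective cover $e_{l}R$ of that top, the kernel lies in $\Rad(e_{l}R)=e_{l}J$, and since $W$ has length $2$ a length count forces $e_{l}R$ to have length $2$ and the kernel to vanish. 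In particular such a $W$ is projective and $\Soc(W)\cong e_{l}J$ is a minimal right ideal of $R$.

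Condition (4)(ii) is then immediate: a simple right $R$-module $S$ is $\cong e_{i}R/e_{i}J$ for some $i$; if $e_{i}R$ is simple then $S\cong e_{i}R\subseteq R$, and otherwise $e_{i}J$ is a minimal right ideal of $R$ (it is simple, and a right ideal since $J$ is two-sided) and the chain $e_{i}J\subseteq e_{i}R\subseteq R$ yields an embedding $S=e_{i}R/e_{i}J\hookrightarrow R/e_{i}J$. For the quasi $V$-ring part of (4)(i), let $S$ be simple and not isomorphic to any right ideal of $R$; as $R$ is right Artinian, it suffices to show $\Ext^{1}_{R}(T,S)=0$ for every simple $T$. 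A nonzero element of $\Ext^{1}_{R}(T,S)$ would yield a non-split extension $0\to S\to W\to T\to 0$ with $W$ of length $2$; being non-semisimple, $W$ would be uniserial with $\Soc(W)=S$, hence $W\cong e_{l}R$ by the structural fact, forcing $S\cong\Soc(e_{l}R)=e_{l}J$ to be a minimal right ideal of $R$, a contradiction. Thus $S$ is injective and $R$ is a right quasi $V$-ring; together with $R$ being right Artinian, this reduces (4)(i) to proving that $R$ is right strongly min-coherent.

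By Proposition \ref{stronglymincoherent} it is enough to show that every minimal right ideal $I$ of $R$ is min-projective, i.e.\ $\Ext^{1}_{R}(I,M)=0$ for every mininjective right $R$-module $M$. Here $I$ is simple; let $e_{k}R$ be its projective cover, so $0\to e_{k}J\to e_{k}R\to I\to 0$ is exact with $e_{k}J=\Rad(e_{k}R)$ and $e_{k}R$ projective. If $e_{k}J=0$ then $I\cong e_{k}R$ is projective and there is nothing to prove. If $e_{k}J\neq 0$, then $e_{k}J$ is a minimal right ideal of $R$, so by mininjectivity of $M$ every homomorphism $e_{k}J\to M$ extends to a homomorphism $R\to M$ and hence, by restriction, to $e_{k}R\to M$; thus $\Hom_{R}(e_{k}R,M)\to\Hom_{R}(e_{k}J,M)$ is surjective, and the long exact sequence obtained by applying $\Hom_{R}(-,M)$ to $0\to e_{k}J\to e_{k}R\to I\to 0$ gives $\Ext^{1}_{R}(I,M)=0$. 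So $R$ is right strongly min-coherent, condition (4) holds in full, and by Theorem \ref{mininjectivesareinjchar} every mininjective right $R$-module is injective.

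I expect this last step to be the main obstacle: the observation to make is that mininjectivity of $M$, which a priori only controls extensions of homomorphisms out of minimal right ideals of $R$, already makes the restriction $\Hom_{R}(e_{k}R,M)\to\Hom_{R}(e_{k}J,M)$ surjective and thereby kills $\Ext^{1}_{R}(I,M)$. The hypotheses ``serial'' and ``$J^{2}=0$'' enter precisely to force $\Rad(e_{k}R)=e_{k}J$ to be simple, so that it really is a minimal right ideal; the remaining verifications (the structural fact about length-$2$ uniserials and condition (4)(ii)) are routine.
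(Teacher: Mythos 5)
Your proof is correct, but it reaches the conclusion by a different route than the paper. The paper argues directly: for a mininjective $M$ and each primitive idempotent $e_{i}$, any homomorphism $e_{i}\J(R)\to M$ extends to $e_{i}R$ (because seriality plus $\J(R)^{2}=0$ forces $e_{i}\J(R)$ to be zero or a minimal right ideal, so mininjectivity applies), whence $\Ext^{1}_{R}(e_{i}R/e_{i}\J(R),M)=0$ for every $i$; thus $M$ is max-injective, and injectivity follows from perfectness (the right $C$-ring property). You instead verify the full package of condition (4) in Theorem \ref{mininjectivesareinjchar} --- right Artinian, right strongly min-coherent, right quasi $V$, and the embedding condition (ii) --- and invoke $(4)\Rightarrow(1)$. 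The essential computation is the same in both: the observation that $\Rad(e_{k}R)=e_{k}\J(R)$ is a minimal right ideal, so mininjectivity already makes $\Hom_{R}(e_{k}R,M)\to\Hom_{R}(e_{k}\J(R),M)$ surjective and kills the relevant $\Ext^{1}$. What your route costs is the extra structural work (the classification of length-$2$ uniserials as the $e_{l}R$, the quasi $V$ verification via non-split length-$2$ extensions, and condition (ii)); what it buys is an explicit explanation of why simples not isomorphic to right ideals cause no trouble, whereas the paper's argument treats all simples uniformly and is considerably shorter. All the individual steps you give (the projective-cover length count, the identification $\Soc(W)\cong e_{l}\J(R)$, the use of the right $C$-ring property to pass from max-injectivity of $S$ to injectivity) check out.
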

\begin{proof}
Since $R$ is serial there exists a complete set of pairwise orthogonal idempotents, $I=\{e_{1},...,e_{n}\}$, such that each $e_{i}R$ is serial. Moreover, since $R$ is right Artinian, the complete set of simple right $R$-modules is given by the set $$\left\{ \frac{e_{1}R }{ e_{1}J(R)}, \frac{e_{2}R} { e_{2}J(R)}, \dots, \frac{e_{n}R}{ e_{n}J(R)}\right\}.$$ Then, any simple right $R$-module is isomorphic to some ${\displaystyle\frac{e_{i}R} { e_{i}J(R)}}$.

Let now $M$ a mininjective right $R$-module and $f:e_{i}J(R)\rightarrow M$ be any homomorphism. $M$ is of course min-$e_{i}R$-injective so there exists a homomorphism $g:e_{i}R\rightarrow M$ such that $gi=f$, where $i:e_{i}J(R) \rightarrow e_{i}R$ is the inclusion map. Therefore, $\Ext^{1}_{R}\left({\displaystyle\frac{e_{i}R} { e_{i}J(R)}},M\right)=0$, whence $M$ is max-injective. Thus, $M$ is injective by the perfectness of $R$.
\end{proof}

\begin{corollary} \label{mininjectivesareinj2}
Let $R$ be a ring whose minimal right ideals are almost-injective. Then, every mininjective right $R$-module is injective if and only if $R$ is a right Artinian serial ring with $J^{2}(R)=0$.
\end{corollary}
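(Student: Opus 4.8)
The plan is to obtain the sufficiency at once from Proposition \ref{qf-serial} and to reduce the necessity to the structure theory of almost injective simple modules over an Artinian ring. Indeed, sufficiency is immediate: if $R$ is a right Artinian serial ring with $J^{2}(R)=0$, then Proposition \ref{qf-serial} already gives that every mininjective right $R$-module is injective, and the standing hypothesis on the minimal right ideals is not even needed for this implication.

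For the necessity, assume that every mininjective right $R$-module is injective. The first step is to invoke Theorem \ref{mininjectivesareinjchar}: its statement $(4)$ tells us that $R$ is right Artinian, right strongly min-coherent and, what matters most here, a right quasi $V$-ring. The second step upgrades ``quasi $V$'' to ``almost $V$'' with the help of the hypothesis. Let $S$ be a simple right $R$-module: if $S$ embeds in $R_{R}$ then $S$ is isomorphic to a minimal right ideal of $R$, hence almost injective by assumption; if $S$ does not embed in $R_{R}$, then $S$ is isomorphic to no right ideal of $R$, so the right quasi $V$ property forces $S$ to be injective and therefore almost injective. In either case $S$ is almost injective, so $R$ is a right Artinian right almost $V$-ring.

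The last and decisive step is to extract the ring structure from ``right Artinian right almost $V$-ring'' (together with the remaining consequences of Theorem \ref{mininjectivesareinjchar}, should they be needed). Here I would use the Baba--Harada description of almost injective modules in the spirit of \cite{baba, almost-v}: for each primitive idempotent $e$, the socle of the indecomposable projective $eR$ is essential in $eR$ and is a direct sum of almost injective simple modules, and the almost injectivity of these simple summands should force $eR$ to be uniserial of Loewy length at most $2$; running the symmetric argument on the left (or using the left--right symmetry of ``serial with radical square zero'') then gives that $R$ is serial with $J^{2}(R)=0$, completing the proof. I expect this structural step to be the main obstacle: it is where the homological input (``mininjective $=$ injective'') has to be converted, through the theory of almost injectivity, into a ring-theoretic conclusion, whereas the preceding steps are only a routine assembly of Theorem \ref{mininjectivesareinjchar}, the quasi $V$ property and the hypothesis on minimal right ideals. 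If, on the other hand, a classification of (right Artinian) right almost $V$-rings is available in \cite{almost-v}, this final step reduces to a direct citation.
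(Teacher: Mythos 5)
Your proposal is correct and follows essentially the same route as the paper: sufficiency from Proposition \ref{qf-serial}, then for necessity apply Theorem \ref{mininjectivesareinjchar} to get that $R$ is right Artinian and a right quasi $V$-ring, combine this with the hypothesis on minimal right ideals to conclude that every simple right module is almost injective (so $R$ is a right almost $V$-ring), and finish by citing the classification of Artinian right almost $V$-rings. The final step you flagged as the potential obstacle is indeed just a direct citation in the paper, namely \cite[Corollary 3.5]{almost-v}, which states that an Artinian right almost $V$-ring is serial with $\J(R)^{2}=0$.
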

\begin{proof}
We only need to prove the necessary condition. For we apply Theorem \ref{mininjectivesareinjchar} to get that $R$ is right Artinian and also a right quasi V-ring. Then, every simple right module which is not isomorphic to an ideal is injective, and by the hypotheses every minimal right ideal is almost-injective, so we see that $R$ is indeed a right almost $V$-ring. Thus, the result follows by \cite[Corollary 3.5]{almost-v}.
\end{proof}

In Theorem \ref{mininjectivesareinjchar} we gave a characterization of rings whose mininjective modules are injective. It turns out that with the help of $C$-rings, such characterization can be reduce to a much simpler one, when the ring is assumed to be Kasch. Recall that a ring $R$ is said to be a \textit{right $C$-ring} if the right $R$-module $R/I$ has non-zero socle for every proper essential right ideal $I$ of $R$ (\cite{neat}). Left perfect rings and right semi-Artinian rings are well known examples of right $C$-rings. Right $C$-rings are exactly the rings whose max-injective right $R$-modules are injective, that is, the rings over which every right $R$-module $M$ satisfying $\Ext_R^1(S,M)=0$ for every simple right $R$-module $S$ is injective (\cite[Lemma 4]{smith}).

\begin{corollary} \label{mininjectivesareinj}
Let $R$ be a right Kasch ring. The following statements are equivalent.
\begin{enumerate}
\item Every mininjective right $R$-module is injective.
\item $R$ is right Artinian and right strongly min-coherent.
\item $R$ is a right $C$-ring right strongly min-coherent.
\end{enumerate}
\end{corollary}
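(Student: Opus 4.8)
The plan is to establish the cycle $(1)\Rightarrow(2)\Rightarrow(3)\Rightarrow(1)$. The implication $(1)\Rightarrow(2)$ is essentially free: Theorem~\ref{mininjectivesareinjchar}, part (4)(i), already asserts that if every mininjective right $R$-module is injective then $R$ is right Artinian and right strongly min-coherent (in fact also a right quasi $V$-ring, which is not needed here). For $(2)\Rightarrow(3)$ the strong min-coherence hypothesis is carried along unchanged, so the only point to check is that a right Artinian ring is a right $C$-ring; this holds because a right Artinian ring is semiprimary, hence left perfect, and left perfect rings are among the standard examples of right $C$-rings recalled above (alternatively, right Artinian rings are right semi-Artinian, which also suffices). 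I expect no difficulty in either of these two steps.

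The substantive direction is $(3)\Rightarrow(1)$. Given a mininjective right $R$-module $M$, I would show that $M$ is max-injective and then invoke the right $C$-ring hypothesis, since right $C$-rings are exactly the rings whose max-injective right modules are injective. To obtain $\Ext^{1}_{R}(S,M)=0$ for an arbitrary simple right $R$-module $S$, I would use that $R$ is right Kasch: $S$ embeds in $R_{R}$, so $S$ is isomorphic to a minimal right ideal $I$ of $R$. Because $R$ is right strongly min-coherent, Proposition~\ref{stronglymincoherent} tells us that $I$ is min-projective, and therefore $\Ext^{1}_{R}(I,M)=0$ as $M$ is mininjective. Hence $\Ext^{1}_{R}(S,M)=0$ for every simple right $R$-module $S$, so $M$ is max-injective, and the $C$-ring property yields that $M$ is injective.

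I expect the main obstacle to be conceptual rather than computational: one must notice that over a Kasch ring the mininjectivity of $M$ — which a priori only controls $\Ext^{1}_{R}(R/I,M)$ for minimal right ideals $I$ — combines with strong min-coherence to control $\Ext^{1}_{R}(I,M)$, hence $\Ext^{1}_{R}(S,M)$ for \emph{every} simple $S$, and that the $C$-ring hypothesis is precisely the device converting this $\Ext$-vanishing against simples into genuine injectivity. Once this interplay is identified, each step reduces to a one-line application of results already established in the paper.
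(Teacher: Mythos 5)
Your proposal is correct and follows essentially the same route as the paper: $(1)\Rightarrow(2)$ via Theorem \ref{mininjectivesareinjchar}, $(2)\Rightarrow(3)$ since right Artinian rings are right $C$-rings, and $(3)\Rightarrow(1)$ by combining the Kasch property with strong min-coherence to get $\Ext^{1}_{R}(S,M)=0$ for all simples $S$ and then invoking the $C$-ring property. Your write-up merely makes explicit the step the paper leaves implicit, namely that min-projectivity of minimal right ideals (Proposition \ref{stronglymincoherent}) is what yields the $\Ext$-vanishing.
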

\begin{proof}
$(1)\Rightarrow (2)$ follows by Theorem \ref{mininjectivesareinjchar}.

$(2)\Rightarrow(3)$ is clear since right Artinian rings are right $C$-rings.

$(3)\Rightarrow (1)$ Let $M$ be a mininjective right $R$-module. Since $R$ is both right Kasch and right strongly min-coherent we have that $\Ext^{1}_{R}(S,M)= 0$ for any simple right $R$-module $S$, that is, $M$ is max-injective, so $M$ is injective since $R$ a right $C$-ring.
\end{proof}

Mininjective rings play an important role when studying the structure of quasi-Frobenius rings. Indeed, Ikeda’s Theorem states that $R$ is quasi-Frobenius if and only if it is right Artinian and right and left mininjective (see \cite[Theorem 2.30]{qfrings}). It is important to highlight that in the proof of this characterization, the two-sided character of the mininjectivity of $R$ is essential as can be seen in \cite[Bj\"ork Example 2.5]{qfrings}. In the following result we provide an Ikeda-type characterization of QF-rings avoiding this two-sided constraint so far existing.

\begin{theorem} \label{qfring}
Let $R$ be a ring. Then the following statements are equivalent.
\begin{enumerate}
\item $R$ is a quasi-Frobenius ring.
\item Every mininjective right $R$-module is projective.
\item $R_{R}$ is mininjective and every mininjective right $R$-module is injective.
\item $R$ is right mininjective, right Artinian, right strongly min-coherent and right Kasch.
\item Every min-flat left $R$-module is injective.
\end{enumerate}
\end{theorem}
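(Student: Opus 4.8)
The plan is to reduce the whole statement to one claim, which will carry essentially all the work:
$(\star)$ \emph{if $R$ is quasi-Frobenius, then every mininjective right $R$-module is injective.}
Once $(\star)$ is available, I would assemble the equivalences from classical facts. Recall the Faith--Walker theorem \cite{faith-walker}: $R$ is quasi-Frobenius iff every projective right $R$-module is injective, equivalently every injective right $R$-module is projective; recall also that a right self-injective right Artinian ring is quasi-Frobenius \cite{qfrings}. Since injective modules are mininjective, Faith--Walker gives $(2)\Rightarrow(1)$ at once, and $(\star)$ together with ``injective $=$ projective over a $QF$ ring'' gives $(1)\Rightarrow(2)$. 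For $(1)\Rightarrow(3)$ I would note that $R_R$ is injective, hence mininjective, and invoke $(\star)$; for $(3)\Rightarrow(1)$, Theorem~\ref{mininjectivesareinjchar} makes $R$ right Artinian while the mininjectivity of $R_R$ makes it right self-injective, so $R$ is $QF$. For $(1)\Rightarrow(4)$ I would use that a $QF$ ring is right Artinian, right Kasch and right self-injective (so $R_R$ is mininjective), and that $(\star)$ plus Theorem~\ref{mininjectivesareinjchar} yields right strong min-coherence; for $(4)\Rightarrow(1)$, Corollary~\ref{mininjectivesareinj} applies to the right Kasch, right Artinian, right strongly min-coherent ring $R$, giving that every mininjective right module is injective, hence that $R_R$ is injective, hence that $R$ is $QF$.

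Next I would dispose of $(5)$. For $(5)\Rightarrow(1)$ the argument is short: flat left modules are min-flat, so $(5)$ forces every flat --- in particular every projective --- left $R$-module to be injective, whence $R$ is $QF$ by Faith--Walker. For $(2)\Rightarrow(5)$: given a min-flat left module $M$, the module $M^{+}$ is mininjective, hence projective by $(2)$, hence flat as a right $R$-module, so $M^{++}$ is an injective left $R$-module by Lambek's characterization of flatness. Since $M$ embeds as a pure submodule of $M^{++}$ and $R$ (being $QF$) is left Noetherian, I would finish by checking Baer's criterion for $M$: the purity of $0\to M\to M^{++}\to M^{++}/M\to 0$, tested against the finitely presented modules $R/I$ ($I$ a left ideal), makes $\Hom(R/I,M^{++})\to\Hom(R/I,M^{++}/M)$ surjective, whence $\Ext^{1}_{R}(R/I,M)=0$.

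The real work is $(\star)$. Let $M$ be a mininjective right $R$-module and put $E=\E(M)$. The first step is to observe that, because a $QF$ ring is right Kasch, $\Soc(N)=N\Soc(R_{R})$ for \emph{every} mininjective right $R$-module $N$: the inclusion $N\Soc(R_{R})\subseteq\Soc(N)$ holds over any ring because $\Soc(R_{R})\J(R)=0$, and for the reverse inclusion any simple submodule of $N$ is the image of a homomorphism from some minimal right ideal $zR$ into $N$ (using right Kaschness), which extends to $R\to N$ by mininjectivity, so its generator maps into $Nz$. Applying this to $M$ and to the injective --- hence mininjective --- module $E$, and using $\Soc(M)=\Soc(E)$ since $M$ is essential in $E$, I would get $M\Soc(R_{R})=E\Soc(R_{R})$. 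When $R$ is \emph{local} this finishes the proof: here $\Soc(R_{R})$ is a minimal right ideal, so fix $0\neq z\in\Soc(R_{R})$; one checks $\Soc(N)=Nz$ for every mininjective $N$, and $\{r\in R:rz=0\}=\J(R)$; moreover $E$ is free (injective $=$ projective over a local $QF$ ring), so for every $e\in E$ we have $ez\in Ez=Mz$, hence $e-m$ annihilates $z$ for some $m\in M$, hence $e-m\in E\J(R)$; thus $E=M+E\J(R)$, and since $\J(R)$ is nilpotent, iterating yields $E=M$, i.e.\ $M$ is injective.

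I expect the main obstacle to be precisely $(\star)$ beyond the local case --- equivalently, showing directly that a quasi-Frobenius ring is right strongly min-coherent (after which Corollary~\ref{mininjectivesareinj} delivers $(\star)$). For a general $QF$ ring one still gets $M\Soc(R_{R})=E\Soc(R_{R})$, i.e.\ $E/M$ is annihilated on the right by $\Soc(R_{R})$, but now the right socle need not be principal, so the one-line passage to $E=M+E\J(R)$ is no longer available; pushing it through seems to require controlling the annihilators of the (possibly non-homogeneous) right socle together with the Nakayama permutation of $R$, and this is where the genuine content of the theorem sits.
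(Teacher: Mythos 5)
Your reduction of the theorem to the single claim $(\star)$ is structurally sound, and the peripheral implications you assemble around it ($(2)\Rightarrow(1)$, $(3)\Rightarrow(1)$, $(4)\Rightarrow(1)$, $(5)\Rightarrow(1)$, $(2)\Rightarrow(5)$) are all correct. But $(\star)$ itself is exactly where your argument stops: you prove it only when $R$ is local and you explicitly leave the general case open. A quasi-Frobenius ring is not in general a product of local rings, so there is no reduction to that case, and your socle identity $M\Soc(R_R)=E\Soc(R_R)$ does not by itself force $E=M$ once the right socle fails to be principal. As submitted, the proof is therefore incomplete at its central point.

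The idea you are missing is a neat-submodule argument, which proves the stronger implication $(1)\Rightarrow(2)$ (hence also $(1)\Rightarrow(3)$) in one stroke and with no case analysis. Given a mininjective right $R$-module $M$, take an epimorphism $f:P\to M$ with $P$ projective. For any simple right module $S$ and any $h:S\to M$, embed $S$ into $R$ via $i:S\to R$ (a $QF$ ring is right Kasch), extend to $g:R\to M$ by mininjectivity of $M$, and lift $g$ through $f$ to $\phi:R\to P$ by projectivity of $R$; then $h=f(\phi i)$, so every homomorphism from a simple module into $M$ lifts to $P$, i.e.\ $\Ker(f)$ is a \emph{neat} submodule of $P$. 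Since $R$ is right Artinian it is a right $C$-ring, so neat submodules are closed; and $P$ is injective because $R$ is $QF$, so the closed submodule $\Ker(f)$ is a direct summand and $f$ splits. Thus $M$ is simultaneously projective and injective, which yields $(2)$, $(3)$ and (applied to $M^{+}$ for a min-flat left module $M$, together with the Cheatham--David character-module criterion over the left Noetherian ring $R$) also $(5)$. No control of annihilators of the socle or of the Nakayama permutation is needed.
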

\begin{proof}
$(1)\Rightarrow(2)$ and $(1)\Rightarrow(3)$ Assume that $R$ is a Quasi Frobenius ring. It is obvious that $R$ is a right mininjective ring. Let $M$ be a mininjective right $R$-module and $f:P\rightarrow M$ an epimorphism with $P$ projective. We claim that $\Ker(f)$ is closed in $P$. Let $S$ be a simple right $R$-module and $h:S\rightarrow M$ a homomorphism. Since $R$ is right Kasch, $S$ can be embedded in $R$ via $i: S \rightarrow R$. Also, since $M$ is mininjective, there exists a homomorphism $g:R\rightarrow M$ such that $gi=h$. Then, projectivity of $R$ implies that there exists a homomorphism $\phi:R\rightarrow P$ such that $f\phi=g$. Thus, $h=gi=f\phi i$, where $\phi i:S\rightarrow P$, and this implies that $\Ker(f)$ is neat in $P$ by \cite[Definition 1.1]{neat}. Since $R$ is right Artinian, $R$ is a right C-ring, and so $\Ker(f)$ is closed in $P$ by \cite[Theorem 1.1]{neat}. On the other hand, being $R$ quasi-Frobenius implies that $P$ is injective. Closedness of $\Ker(f)$ in $P$ implies that the epimorphism $f:P\rightarrow M$ splits. Thus, $M$ is both projective and injective.

$(2)\Rightarrow(1)$ Every injective module is mininjective.

$(3)\Rightarrow(1)$ $R$ is right Artinian by Theorem \ref{mininjectivesareinjchar}.

$(3)\Leftrightarrow (4)$ Apply Theorem \ref{mininjectivesareinjchar} and Corollary \ref{mininjectivesareinj}.

$(1)\Rightarrow(5)$ For any min-flat left $R$-module $M$, $M^{+}$ is a mininjective right $R$-module, so the proof of $(1)\Rightarrow(2)$ shows that $M^{+}$ is projective. Since $R$ is left Noetherian, $M$ is injective by \cite[Theorem 2]{chetnam}.

$(5)\Rightarrow(1)$ Every projective left $R$-module is min-flat.
\end{proof}

It is well known that commutative Artinian rings are Kasch. Thus, the following result is now immediate from Corollary \ref{mininjectivesareinj}.

\begin{corollary} \label{commartin}
Let $R$ be a commutative ring. The following statements are equivalent.
\begin{enumerate}
\item Every mininjective $R$-module is injective.
\item $R$ is strongly min-coherent and Artinian.
\end{enumerate}

Moreover, if $Soc(R_{R})$ is almost-injective then the above are equivalent to:
\begin{enumerate}
\item[(3)] $R$ is a quasi-Frobenius serial ring with $\J(R)^{2}=0$.
\end{enumerate}
\end{corollary}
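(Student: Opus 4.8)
The plan is to reduce everything to results already established, exploiting the elementary fact recalled just above the statement: a commutative Artinian ring is Kasch, since it has finite length and hence every simple quotient $R/\mathfrak{m}$ embeds into $\Soc(R)\subseteq R$. Thus the only work is to put the ring into a shape where Corollary \ref{mininjectivesareinj} and the almost $V$-ring machinery of Section 2 apply; no new argument is needed.

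First I would settle $(1)\Leftrightarrow(2)$. For $(1)\Rightarrow(2)$, observe that if every mininjective $R$-module is injective then Theorem \ref{mininjectivesareinjchar} already makes $R$ right Artinian, hence Kasch, so Corollary \ref{mininjectivesareinj}\,$(1)\Rightarrow(2)$ applies and gives that $R$ is Artinian and strongly min-coherent; in fact Theorem \ref{mininjectivesareinjchar} itself delivers strong min-coherence, so this direction is immediate either way. For $(2)\Rightarrow(1)$, a commutative Artinian ring is Kasch, so $R$ is a right Kasch ring which is right Artinian and right strongly min-coherent, and Corollary \ref{mininjectivesareinj}\,$(2)\Rightarrow(1)$ yields that every mininjective $R$-module is injective. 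This equivalence requires no extra hypothesis on $\Soc(R)$.

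Next I would handle the supplementary clause, now assuming $\Soc(R)$ is almost-injective. For $(1)\Rightarrow(3)$: by Theorem \ref{mininjectivesareinjchar}, $(1)$ forces $R$ to be a right Artinian right quasi $V$-ring; being Artinian it is Noetherian, hence an $N$-ring, and it is commutative, so Corollary \ref{almostv}\,$(2)\Rightarrow(1)$ (equivalently, Proposition \ref{cl}) shows $R$ is an almost $V$-ring. Then Proposition \ref{cl} identifies $R$ precisely as a quasi-Frobenius serial ring with $\J(R)^{2}=0$, which is $(3)$. For $(3)\Rightarrow(1)$: a quasi-Frobenius ring is Artinian, so $(3)$ says in particular that $R$ is an Artinian serial ring with $J^{2}(R)=0$, whence Proposition \ref{qf-serial} gives that every mininjective $R$-module is injective. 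Together with $(1)\Leftrightarrow(2)$ this makes $(1)$, $(2)$, $(3)$ equivalent under the stated hypothesis.

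I do not foresee a genuine obstacle; the proof is a matter of chaining Theorem \ref{mininjectivesareinjchar}, Corollary \ref{mininjectivesareinj}, Corollary \ref{almostv}, Proposition \ref{cl} and Proposition \ref{qf-serial} in the correct order. The only points worth a line of comment are that the Kasch hypothesis of Corollary \ref{mininjectivesareinj} and the $N$-ring hypothesis of Proposition \ref{cl} are both automatic in the commutative case once Artinianity is extracted from Theorem \ref{mininjectivesareinjchar}, and that the assumption ``$\Soc(R)$ almost-injective'' is used only for $(1)\Rightarrow(3)$, the converse $(3)\Rightarrow(1)$ being unconditional via Proposition \ref{qf-serial}.
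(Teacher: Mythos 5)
Your proof is correct and follows essentially the same route as the paper: $(1)\Leftrightarrow(2)$ via Corollary \ref{mininjectivesareinj} together with the fact that commutative Artinian rings are Kasch, and $(1)\Rightarrow(3)$ via Theorem \ref{mininjectivesareinjchar} combined with Proposition \ref{cl}. The only (equally valid) deviation is in $(3)\Rightarrow(1)$, where you invoke Proposition \ref{qf-serial} using the Artinian serial, $\J(R)^{2}=0$ part of the hypothesis, whereas the paper cites Theorem \ref{qfring}, which needs only the quasi-Frobenius part; both are one-line reductions to results already established.
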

\begin{proof}
$(1)\Leftrightarrow(2)$ Clear by Corollary \ref{mininjectivesareinj}.

$(1)\Rightarrow(3)$ It follows by Theorem \ref{mininjectivesareinjchar} and Proposition \ref{cl}.

$(3)\Rightarrow(1)$ It follows by Theorem \ref{qfring}.
\end{proof}

Following \cite{universallymininjective}, a ring $R$ is called \emph{right universally mininjective} if every right $R$-module is mininjective, equivalently, every minimal right ideal is a direct summand of $R$. However, it turns out that studying the universal mininjective character of a ring doesn't require to check the mininjectivity of every single $R$-module. On the contrary, there exists a class of modules that serve as universal mininjectivity test: the class of simple modules or even the class minimal right ideals. To start with we note that it is clear that $R$ is a right universally mininjective ring if and only if $R$ is a right mininjective and right $PS$-ring. We then have the following.

 \begin{theorem} \label{universiallymininject}
Let $R$ be a ring. The following statements are equivalent.
\begin{enumerate}
\item $R$ is right universally mininjective.
\item Every finitely generated right $R$-module is mininjective.
\item Every cyclic right $R$-module is mininjective.
\item Every simple right $R$-module is mininjective.
\item Every minimal right ideal of $R$ is mininjective.
\item If $aR$ is a minimal right ideal then $Ra$ is a direct summand of $R$.
\item Mininjective right $R$-modules are closed under submodules.
\end{enumerate}
\end{theorem}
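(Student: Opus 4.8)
The plan is to run the cycle of implications $(1)\Rightarrow(2)\Rightarrow(3)\Rightarrow(4)\Rightarrow(5)\Rightarrow(1)$ and then to treat the remaining two conditions by proving $(1)\Leftrightarrow(6)$ and $(1)\Leftrightarrow(7)$ independently. Among the first five, all steps but the last are formal: $(1)\Rightarrow(2)$ holds because under $(1)$ every right module is mininjective; $(2)\Rightarrow(3)$ because cyclic modules are finitely generated; $(3)\Rightarrow(4)$ because simple modules are cyclic; and $(4)\Rightarrow(5)$ because each minimal right ideal is a simple right $R$-module. The one step with content is $(5)\Rightarrow(1)$: given a minimal right ideal $K$, I would apply the mininjectivity of the module $K$ to the minimal right ideal $K$ itself and to the identity map $K\to K$; the resulting extension $R\to K$ is a retraction of the inclusion $K\hookrightarrow R$, so $K$ is a direct summand of $R_R$. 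Since $K$ was arbitrary, the criterion recalled just before the statement gives that $R$ is right universally mininjective.

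For $(1)\Leftrightarrow(7)$ the forward implication is vacuous, since under $(1)$ every right $R$-module is mininjective and hence the class of mininjective modules, being everything, is trivially closed under submodules. For $(7)\Rightarrow(1)$ I would take an arbitrary right $R$-module $M$ and embed it into its injective envelope $\E(M)$; injective modules are mininjective, so $(7)$ forces $M$ to be mininjective, and thus every right $R$-module is mininjective.

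The substantive part is $(1)\Leftrightarrow(6)$, and the key input is the classical Brauer splitting lemma: every minimal right ideal $I$ of $R$ satisfies either $I^{2}=0$ or $I=eR$ for an idempotent $e$. For $(1)\Rightarrow(6)$: if $aR$ is a minimal right ideal, $(1)$ yields an idempotent $e$ with $aR=eR$, so $a=ea$ and $e=ab$ for a suitable $b\in R$; a short calculation then shows that $g:=ba$ is idempotent and $Ra=Rg$, so $Ra$ is a direct summand of ${}_{R}R$. The direction $(6)\Rightarrow(1)$ is the step I expect to be the main obstacle. Here it suffices to rule out square-zero minimal right ideals, for then Brauer's lemma makes every minimal right ideal idempotent-generated, hence a direct summand, and $R$ is right universally mininjective. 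So I would suppose, towards a contradiction, that $aR$ is a minimal right ideal with $(aR)^{2}=0$ and $a\neq 0$; condition $(6)$ gives $Ra=Rf$ for an idempotent $f$, and writing $f=ra$ and $a=af$ one checks that $ar$ is an idempotent lying in $aR$, so $ar=0$ because $(aR)^{2}=0$; substituting this back gives $f=f^{2}=r(ar)a=0$, contradicting $0\neq a\in Ra=Rf$. This rules out square-zero minimal right ideals and finishes the proof.
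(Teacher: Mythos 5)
Your proof is correct, and while most of it (the chain $(1)\Rightarrow(2)\Rightarrow(3)\Rightarrow(4)\Rightarrow(5)\Rightarrow(1)$ and the equivalence $(1)\Leftrightarrow(7)$) coincides with the paper's argument --- your retraction obtained from extending $\mathrm{id}_K$ along $K\hookrightarrow R$ is exactly the splitting of $0\to I\to R\to R/I\to 0$ that the paper extracts from $\Ext^1_R(R/I,I)=0$ --- you handle $(1)\Leftrightarrow(6)$ by a genuinely different route. The paper disposes of $(1)\Leftrightarrow(6)$ entirely by citation: it invokes Mao's results that $R$ is right universally mininjective iff $R/Ra$ is min-flat whenever $aR$ is minimal, together with the fact that such a cyclically presented min-flat module is projective, so that $Ra$ splits off. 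You instead argue from first principles: Brauer's lemma reduces everything to ruling out square-zero minimal right ideals, and your idempotent computations ($g=ba$ idempotent with $Rg=Ra$ in one direction; $ar$ idempotent in $aR$ forcing $ar=0$ and then $f=r(ar)a=0$ in the other) are all verifiably correct (e.g.\ $g^2=b(ab)a=bea=ba=g$ using $ea=a$, and $a=a(ba)\in Rg$). Your version is self-contained and purely elementary, at the cost of a slightly longer argument and an appeal to Brauer's splitting lemma; the paper's version is shorter on the page but imports the min-flat machinery from the literature. Both are sound.
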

\begin{proof} For $(1)\Rightarrow (2)\Rightarrow (3)\Rightarrow(4)\Rightarrow (5)$ and $(1)\Rightarrow (7)$ there is nothing to prove.

$(5)\Rightarrow (1)$ For any minimal right ideal $I$ of $R$ we have $\Ext^{1}_R(R/I,I)=0$, that is, the exact sequence $$0\rightarrow I\rightarrow R\rightarrow R/I\rightarrow 0$$ splits, and so $I$ is a direct summand of $R$.

$(7)\Rightarrow (1)$ Every right $R$-module can be embedded in an injective module so every right $R$-module is mininjective.

$(1)\Rightarrow (6)$ If $aR$ is minimal then $R/Ra$ is min-flat by \cite[Theorem 5.10]{minflat}. Furthermore, $R/Ra$ is indeed projective by \cite[Corollary 3.3]{minflat}, so $Ra$ is a direct summand of $R$.

$(6)\Rightarrow (1)$ If $aR$ is minimal then $R/Ra$ is projective by the hypotheses, so in particular it is min-flat. Then, \cite[Theorem 5.10]{minflat} gives the result.
\end{proof}

In \cite[Theorem 3.8]{almost-v} it was shown that a $2 \times 2$ upper triangular matrix ring over a ring ($UT_{2}(R)$) $R$ is a right almost $V$-ring precisely when $R$ is semisimple. Combining this with the following Theorem we obtain that the ring $R=UT_{n}(k)$ ($k$ is a field) is a right almost $V$-ring if and only if $n=2$. Moreover, the following theorem will also allow us to give an example of a ring which is a right quasi $V$-ring but neither a Kasch nor a right $V$-ring.

\begin{theorem} \label{uppertriangular}
Let $R=UT_{n}(k)$ be the ring of $n \times n$ upper triangular matrices over a field $k$. The following statements are equivalent.
\begin{enumerate}
\item $R$ is a right (resp. left) almost V-ring.
\item Every min-injective right (resp. left) $R$-module is injective.
\item $n=2$.
\end{enumerate}
\end{theorem}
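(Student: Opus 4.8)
The plan is to take $n\ge 2$ throughout (for $n=1$ the ring $UT_1(k)=k$ satisfies (1) and (2) but not (3), so $n\ge 2$ is implicit in the statement) and to prove the four implications $(3)\Rightarrow(1)$, $(3)\Rightarrow(2)$, $(1)\Rightarrow(3)$ and $(2)\Rightarrow(3)$. First I would remove the left--right distinction: for $R=UT_n(k)$ the assignment $A\mapsto wA^{t}w$, with $A^{t}$ the transpose and $w$ the antidiagonal permutation matrix, is a ring isomorphism $R^{op}\cong R$, so each of (1) and (2) holds on the right exactly when it holds on the left. Hence it suffices to prove the four implications for the right-hand versions.

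Next I would record the elementary module theory of $R_R$, $R=UT_n(k)$. Writing $e_i=E_{ii}$ and using $E_{ij}E_{k\ell}=\delta_{jk}E_{i\ell}$, one checks that $e_iR=\bigoplus_{j\ge i}kE_{ij}$ is uniserial of length $n-i+1$ with composition factors, from the top, $S_i,S_{i+1},\dots,S_n$. In particular $\Soc(e_iR)\cong S_n$ for every $i$, so $\Soc(R_R)\cong S_n^{(n)}$ is homogeneous and the only simple right $R$-module isomorphic to a right ideal of $R$ is $S_n$. Moreover $e_1R\to S_1$ is a projective cover with kernel $e_1\J(R)=\bigoplus_{j\ge 2}kE_{1j}$, which is uniserial with top $S_2$; since $e_1R$ is projective with top $S_1\ne S_2$, the long exact $\Ext$-sequence gives $\Ext^1_R(S_1,S_2)\cong\Hom_R(e_1\J(R),S_2)\ne 0$, so $S_2$ is not injective. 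With these facts in hand $(3)\Rightarrow(1)$ is immediate: for $n=2$, $R=UT_2(k)$ is the $2\times 2$ upper triangular matrix ring over the field $k$, which is a semisimple ring, so $R$ is a right almost $V$-ring by \cite[Theorem 3.8]{almost-v}. And $(3)\Rightarrow(2)$ is also immediate: for $n=2$, $R=UT_2(k)$ is an Artinian serial ring with $\J(R)^2=0$, so every mininjective right $R$-module is injective by Proposition \ref{qf-serial}.

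It remains to prove $(1)\Rightarrow(3)$ and $(2)\Rightarrow(3)$, which I would do contrapositively. Assume $n\ge 3$. Then $S_2$ is a simple right $R$-module that, by the facts above (using $2\ne n$), is neither injective nor isomorphic to a right ideal of $R$; therefore $R$ is not a right quasi $V$-ring. By Remark \ref{quasiVbutnotalmostV} a right almost $V$-ring is a right quasi $V$-ring, so $R$ cannot be a right almost $V$-ring, contradicting (1); and since ``every mininjective right $R$-module is injective'' forces $R$ to be a right quasi $V$-ring by Theorem \ref{mininjectivesareinjchar}, it contradicts (2) as well. Thus each of (1) and (2) implies $n\le 2$, hence $n=2$ given $n\ge 2$. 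The whole argument is an assembly of earlier results together with the routine matrix-unit computation of the second paragraph; the only point needing a touch of care is the left--right passage, and the self-opposite structure of $UT_n(k)$ settles that in one line, so I do not expect any genuine obstacle.
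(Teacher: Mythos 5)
Your proposal is correct, and its core coincides with the paper's: the decisive step in both is that for $n\ge 3$ the simple module $S_2=e_2R/e_2\J(R)$ is neither isomorphic to a right ideal (since $\Soc(R_R)$ is a direct sum of copies of $S_n$) nor injective, so $R$ fails to be a right quasi $V$-ring, which is incompatible with (2) by Theorem \ref{mininjectivesareinjchar}; and the $n=2$ directions rest on $UT_2(k)$ being Artinian serial with square-zero radical together with Proposition \ref{qf-serial} and \cite{almost-v}. The differences are organizational but worth recording. You prove $(1)\Rightarrow(3)$ directly by invoking Remark \ref{quasiVbutnotalmostV} (almost $V$ implies quasi $V$), whereas the paper runs the cycle $(1)\Rightarrow(2)\Rightarrow(3)\Rightarrow(1)$, passing through \cite{almost-v} to get seriality from (1); your route is marginally shorter and isolates the quasi $V$ property as the single obstruction for both (1) and (2). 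You establish non-injectivity of $S_2$ by the computation $\Ext^1_R(S_1,S_2)\cong\Hom_R(e_1\J(R),S_2)\neq 0$ from the projective presentation of $S_1$, while the paper exhibits an explicit homomorphism $e_1\J(R)\to S_2$ that cannot extend to $R$ because any extension would land in $\J(S_2)=0$; these are the same fact seen homologically versus diagrammatically. Finally, you address the parenthetical ``(resp.\ left)'' via the anti-automorphism $A\mapsto wA^{t}w$ of $UT_n(k)$, a point the paper's proof passes over in silence, and you flag that $n\ge 2$ must be read into the statement since $UT_1(k)=k$ satisfies (1) and (2); both are genuine, if small, improvements in completeness. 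One sentence to polish: ``$R=UT_2(k)$ \dots, which is a semisimple ring'' reads as if $UT_2(k)$ itself were semisimple (it is not); the relative clause should unambiguously modify the base field $k$, since \cite{almost-v} characterizes $UT_2(S)$ as a right almost $V$-ring exactly when the coefficient ring $S$ is semisimple.
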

\begin{proof}
$(1)\Rightarrow(2)$ Since $R$ is Artinian, being a right (resp. left) almost V-ring implies that $R$ is serial and $J^{2}(R)=0$ by \cite[Corollary 3.5]{almost-v}. Thus (2) follows by Proposition \ref{qf-serial}.

$(2)\Rightarrow(3)$ We know there is a complete set of simple right $R$-modules $$\left\{\frac{e_{1}R}{e_{1}J(R)},\frac{e_{2}R}{e_{2}J(R)}, \dots, \frac{e_{n-1}R}{e_{n-1}J(R)}, e_{n}R\right\}$$ where the $e_{i}$'s are pairwise orthogonal local idempotents such that $e_{i}J(R)\cong e_{i+1}R$ as $R$-modules for any $i\in \{ 1,2,...,n-1 \}$ (see \cite[p. 366]{lam1}). Of course, $e_{n}R$ is a projective right $R$-module.

If $n\geq 3$ then $e_{2}R/e_{2}J(R)$ cannot be isomorphic to any right ideal of $R$, for otherwise we would have $e_{2}R/e_{2}J(R)\cong e_{n}R$.

Let us show that $e_{2}R/e_{2}J(R)$ is not injective.

Since $e_{1}J(R)\cong e_{2}R$, we can consider a non-zero homomorphism $$f:e_{1}J(R) \rightarrow e_{2}R \rightarrow e_{2}R/e_{2}J(R)$$ ($e_{2}R \rightarrow e_{2}R/e_{2}J(R)$ is the canonical epimorphism). Now, if the diagram $$\xymatrix{ &   e_{1}J(R) \ar@{^{(}->}[r] \ar[d]_{f} & R \ar@{-->}[dl]^{g}  \\ & {\displaystyle \frac{e_{2}R}{e_{2}J(R)}}}$$ could be commutatively completed by $g$, we would have $$g(e_{1}J(R)) \subseteq g(\J(R)) \subseteq \J(e_{2}R/e_{2}J(R)) = 0,$$ contradicting the fact that $f$ is non-zero.

Therefore, $R$ is not a right quasi $V$-ring and this contradicts the hypothesis (2) (see for example Theorem \ref{mininjectivesareinjchar}).

$(3)\Rightarrow(1)$ If $n=2$ then $R$ is Artinian, serial and $J^{2}(R)=0$, so \cite[Theorem 3.8]{almost-v} completes the proof.
\end{proof}

\section*{Acknowledgments}
The work was carried out when the first author was visiting the Almeria University for his postdoctoral research supported by The Scientific and Technological Research Council of Turkey (TUBITAK) under the 2219 - International Postdoctoral Research Fellowship Program for Turkish Citizens. He would like to thank the university for the kind hospitality.

The authors J. R. García Rozas and Luis Oyonarte were partially supported by a project from the Spanish Ministerio de Ciencia e Innovación through its Agencia Estatal de Investigación, REFERENCIA DEL PROYECTO/AEI/PID2020-113552GB-I00 and by the grant 340206-PROYECTO P-FORT-GRUPOS-2023/103 from the University of Almería.

\end{document}